\setlist[enumerate,1]{label=(\roman*),ref=(\roman*)}
\newcommand{\highlight}[1]{\textit{#1}}
\newtheorem{Thm}{Theorem}
\newtheorem{Lemma}[Thm]{Lemma}
\newtheorem{Prop}[Thm]{Proposition}
\newtheorem{Cor}[Thm]{Corollary}
\theoremstyle{definition}
\newtheorem{Def}[Thm]{Definition}
\newtheorem{Rem}[Thm]{Remark}
\theoremstyle{remark}
\newtheorem{Example}{Example}
\newcommand{\F}{\ensuremath{\mathbb{F}}\xspace} 
\newcommand{\NN}{\ensuremath{\mathbb{N}}\xspace} 
\newcommand{\ZZ}{\ensuremath{\mathbb{Z}}\xspace} 
\newcommand{\RR}{\ensuremath{\mathbb{R}}\xspace} 
\newcommand{\G}{\ensuremath{G}\xspace} 
\newcommand{\Eset}{\ensuremath{\mathscr{E}}\xspace}
\newcommand{\algbr}[1]{\ensuremath{\mathcal{#1}}\xspace} 
\newcommand{\Aalg}{\algbr{A}} 
\newcommand{\Balg}{\algbr{B}} %
\newcommand{\Dalg}{\algbr{D}} %
\newcommand{\Ialg}{\algbr{I}} %
\newcommand{\Ralg}{\algbr{R}} %
\newcommand{\Ualg}{\algbr{U}} %
\newcommand{\B}{\ensuremath{\beta}\xspace} 
\newcommand{\V}{\ensuremath{\mathcal{V}}\xspace}
\newcommand{\W}{\ensuremath{\mathcal{W}}\xspace}
\newcommand{\range}{\operatorname{range}}
\newcommand{\End}{\operatorname{End}}
\newcommand{\EndF}{\End_{\F}}
\newcommand{\EndFV}{\ensuremath{\EndF\V}\xspace}
\newcommand{\ann}{\operatorname{ann}}
\newcommand{\UT}{\operatorname{UT}}
\newcommand{\UTB}{\UT_{\B}}
\newcommand{\UTm}{\UT_{\B_m}}
\newcommand{\UTBV}{\ensuremath{\UTB\V}\xspace}
\newcommand{\fin}{\operatorname{fin}}
\newcommand{\UTfin}{\UT^{\fin}}
\newcommand{\UTBlim}{\UT\limits_{\to\B}}
\newcommand{\UTBVlim}{\ensuremath{\UTBlim\V}\xspace}
\newcommand{\UTmlim}{\UT\limits_{\to\B_m}}
\newcommand{\UTBfin}{\UTfin_{\B}}
\newcommand{\UTBVfin}{\ensuremath{\UTBfin\V}\xspace}
\newcommand{\Span}{\operatorname{Span}}
\newcommand{\Supp}{\operatorname{Supp}}
\newcommand{\TNil}{\operatorname{TNil}}
\subjclass[2010]{16W50}
\keywords{Group gradings, triangularizable algebra}
\title{Group gradings on triangularizable algebras}
\author{Waldeck Sch\"utzer}
\address{Department of Mathematics, Universidade Federal de S\~{a}o Carlos, Brazil}
\email{waldeck@dm.ufscar.br}
\author{Felipe Yukihide Yasumura}
\address{Department of Mathematics, Instituto de Matem\'atica e Estat\'istica, Universidade de S\~ao Paulo, SP, Brazil}
\email{fyyasumura@ime.usp.br}
\thanks{The second named author acknowledges the support of the S\~ao Paulo Research Foundation (FAPESP), grants no.\,2018/23690-6 and no.\,2023/03922-8.}
\begin{document}
\begin{abstract}
Classifying isomorphism classes of group gradings on algebras presents a compelling challenge, particularly within the realms of non-simple and infinite-dimensional algebras, which have been relatively unexplored. This study focuses on a kind of algebra that is neither simple nor finite-dimensional, aiming to classify the group gradings on triangularizable algebras as defined by Mesyan in 2019. The topology of infinite-dimensional algebras, along with the role of idempotent elements, plays a crucial role in our findings, leading to new insights and a deeper understanding of their structure.
\end{abstract}
\maketitle


\section{Introduction}

The classification of group gradings on algebras has garnered significant interest from researchers, particularly since the foundational work of Patera and Zassenhaus \cite{PZ89}. Their systematic study primarily targeted simple Lie algebras and laid the groundwork for subsequent influential studies, such as those presented in \cite{BaSeZa2001, bashza, BaZa2002, BD2013, BaKoch, DM2006, Elduque}. The latest advancements in this field are comprehensively summarized in the monograph \cite{EK13}. Notably, there were classification efforts preceding Patera and Zassenhaus's work, including \cite{Zelm}. Recently, significant progress was made with the classification of group gradings on finite-dimensional associative, Lie, and Leibniz algebras via their quantum symmetric semigroups \cite{AgoMil, Mil}.

While most research efforts have focused on finite-dimensional algebras, there exist numerous significant infinite-dimensional graded algebras, such as polynomial algebras, graded algebras derived from filtrations, and Leavitt path algebras (see, for instance, \cite{Hazbook}). Classifying group gradings on a given infinite-dimensional algebra is far from trivial, and little is known in this area.

One of the pioneering studies addressing this issue is \cite{BaZa2010}, which classified gradings on infinite matrices with finitely many nonzero entries over an algebraically closed field of characteristic zero. Additionally, by employing Functional Identities techniques, researchers have elucidated the structure of specific group gradings on Lie and Jordan algebras \cite{BaBr, BaBrSh}. Further extending this research, \cite{BaBrKoch} provided a classification of group gradings on infinite-dimensional primitive algebras with minimal ideals, thereby classifying abelian group gradings on infinite-dimensional finitary simple Lie algebras.

All the mentioned works deal with algebras closely related to simple ones. In another direction, some papers focus on classifying group gradings on non-simple algebras. Bahturin classified the group gradings on a free nilpotent Lie algebra \cite{Bahturin}. For finite-dimensional associative algebras, the isomorphism classes of group gradings on the upper triangular matrices $\UT_n\F$ over an arbitrary field \F were classified in \cite{VinKoVa2004,VZ2003,VZ2007}, showing that every group grading on $\UT_n\F$ is elementary. Moreover, a classification of the isomorphism classes of group gradings was provided. Next, a classification of group gradings on upper-block triangular matrices was presented in \cite{BFD2018,VZ2011,KochY,Y}. The works \cite{BesDas,Jon2006,MSp2010,Pr2013,SSY} investigated group gradings on incidence algebras. Following this trend, the next natural step would be to consider upper triangular matrices in the infinite-dimensional setting. However, there is no unique way to define an infinite upper triangular matrix. One approach, as done in \cite{BaZa2010}, is to consider the direct limit $\lim\limits_\to\mathrm{UT}_n$. A much more interesting possibility has been recently opened by Mesyan \cite{Mes}, allowing for the construction of an algebra of infinite upper triangular matrices that can have infinitely many nonzero columns, even uncountably many, yet retain only finitely many nonzero entries in each column. See also \cite{EIS,Mesb}.

Following these works, let \V be an infinite-dimensional vector space over a field \F, and let $(\B, \leq)$ be a well-ordered basis of \V. For each $v \in \B$, let $\V(v) = \Span\{w \in \B \mid w \leq v\}$. Mesyan defines an element $t \in \EndFV$ as being upper triangular with respect to $(\B, \leq)$ if $t(v) \in \V(v)$ for all $v \in \B$. Here, we define \UTBV as the subset of all upper triangular elements in \EndFV with respect to the basis \B (cf. \cite[Definition 4]{Mes}). This is a triangularizable algebra in the sense of \cite[Definition 2.1]{Mesb}.

For each pair of nonzero vectors $u, v \in \B$, we let $e_{uv}$ be the unique transformation in \EndFV defined by
\[
e_{uv}(w) = \begin{cases}
u, & \text{if } w = v, \\
0, & \text{otherwise}.
\end{cases}
\]
for all $w \in \B$. These are called matrix units relative to \B. In particular, $e_{vv}$ is the projection of \V onto the subspace spanned by $v$, with kernel $\Span(\B \setminus \{v\})$. Every $t \in \EndFV$ satisfies $e_{uu} t e_{vv} = a_{uv} e_{uv}$ for some $a_{uv} \in \F$. We find it useful and convenient to define $t(u, v) = a_{uv}$, so that $e_{uu} t e_{vv} = t(u, v) e_{uv}$ for all $u, v \in \B$. It should be noted that this notation is ambiguous as it bears no explicit reference but depends on the basis \B.

Other examples of triangularizable algebras of interest include $\UTBVfin = \{t \in \UTBV \mid \dim t(\V) < \infty\}$ and $\UTBVlim = \Span\{e_{uv} \mid u \leq v \in \B\}$. The former can be identified with $\UT_n \F$ when $|\B| = n$ is finite, in which case $\UTBV = \UTBVfin = \UTBVlim$. The latter can be identified with the direct limit $\lim\limits_{\to} \UT_n \F$ when $\B = \{v_n \mid n \in \NN\}$ is countable.
The algebra \UTBV is clearly a unital subalgebra of \EndFV, and both \UTBVfin and \UTBVlim are generally non-unital subalgebras of \UTBV. Furthermore, from \cite[Lemma 2.5]{Mesb}, the closure of \UTBV in the function topology of \EndFV is still triangular with respect to \B; hence, \UTBV is topologically closed in \EndFV.

Recall that an \F-algebra \Aalg is graded by a group \G if there exist subspaces $\Aalg_g$, for $g \in \G$, satisfying $\Aalg = \bigoplus_{g \in \G} \Aalg_g$ and $\Aalg_g \Aalg_h \subseteq \Aalg_{gh}$, for all $g, h \in \G$. In direct analogy with the finite-dimensional case, we consider good and elementary gradings as follows:
\begin{Def}
A \G-grading on \Aalg, where $\UTBVlim \subseteq \Aalg \subseteq \UTBV$, is a \highlight{good grading} if every matrix unit $e_{uv}$ ($u, v \in \B$) is homogeneous in the grading. The grading is \highlight{elementary} if there is a function $\gamma \colon \B \to \G$ such that $e_{uv}$ is homogeneous of degree $\gamma(u)^{-1}\gamma(v)$.
\end{Def}
We notice that if each $e_{uv}$ is homogeneous and the grading is finite, then the grading is completely determined (see \Cref{inducedfromgood}). This fact is far from obvious in the present context. In fact, it is not at all obvious why \Aalg should have any nontrivial gradings.

Here, we prove that every group grading on \UTBVlim, \UTBVfin and \UTBV is isomorphic to an elementary grading. Moreover, the group gradings on the latter two are finite and induced from a finite group grading on the former. We also classify the isomorphism classes of group gradings on these algebras (see \Cref{statementmainthm}).

Finally, we remark that if \B is uncountably infinite, then \UTBV is isomorphic to the finitary incidence algebra of the non locally-finite partially ordered set $(\B,\leq)$ (cf. \cite{Khrip}), and if \B is finite or \NN, then \UTBV is isomorphic to the incidence algebra of the locally finite partially ordered set $(\B,\leq)$. 

\section{Preliminaries}\label{sec:prelim}

\subsection{Topological algebras}\label{subsec:top:alg}

A \highlight{topological vector space} is a vector space \V endowed with a topology such that the sum $\V \times \V \to \V$ and the scalar multiplication $\F \times \V \to \V$ are continuous maps. The Cartesian products have the product topology, and \F has the Hausdorff topology. In general, we shall assume that \F is endowed with the discrete topology. A \highlight{topological algebra} is a topological vector space \Aalg endowed with a continuous bilinear map $\Aalg \times \Aalg \to \Aalg$. All the topological algebras in this paper will be Hausdorff.

Let $X$ be a topological space and $Y$ a set. Then, the space $X^Y=\{f\colon Y\to X\text{ map}\}$ has a natural topology given by the product topology if we identify $X^Y$ with $\prod_YX$. This topology on $X^Y$ is called the \highlight{topology of pointwise convergence}. In the particular case where $X$ has the discrete topology, the topology on $X^Y$ is called the \highlight{finite topology}. In this case, a basis of neighborhoods of $f\in X^Y$ is given by
$$
\mathscr{N}(f,S)=\{g\in X^Y\mid g(x)=f(x),\forall x\in S\},
$$
where $S\subseteq Y$ is a finite set.

Now, let \V be a vector space endowed with the discrete topology, so that $\V^{\V}$ is endowed with the Hausdorff finite topology. 
Then, the closed subset $\EndFV\subseteq\V^\V$ inherits the finite topology, and thus it is a Hausdorff topological algebra. A basis of neighborhoods of $0$, in this case, is given by
$$
\mathscr{N}(\W)=\{T\in\EndFV\mid T|_\W=0\},
$$
where $\W\subseteq\V$ is finite-dimensional. Indeed, let $S$ be a finite set that spans \W. Then $\mathscr{N}(\W)=\mathscr{N}(0,S)$, in the above notation.

Let $\Eset=\{e_i\mid i\in J\}\subseteq\V$, where \V is a Hausdorff topological vector space,  so that every convergent net has a unique limit in \V. We say that \Eset is \highlight{summable} if the net
$$
\left\{\sum_{i\in S}e_i\mid S\subseteq J\text{ is finite}\right\}
$$
converges. In this case, it is usual to denote the limit by $\sum_{i\in J}e_i$ or by $\sum_{e\in\Eset}e$.

Let $(J,\leq)$ be a directed set. Recall that a \highlight{net} in a topological space $X$ is a function from $J$ to $X$. A net $\{x_j\}$ is said to \highlight{converge} to $x\in X$ if for every neighborhood $U$ of $x$, there exists $j_0\in J$ such that $x_j\in U$ for all $j\geq j_0$. A net $(x_\alpha)_{\alpha\in J}$ in a topological vector space \V is called a \highlight{Cauchy net} if for any neighborhood $U$ of $0$, there exists $\gamma\in J$ such that $\alpha,\alpha'\ge\gamma$ implies $x_\alpha-x_{\alpha'}\in U$. A topological vector space is called \highlight{complete} if every Cauchy net converges. Moreover, if \V is complete and $\W \subseteq \V$ is a closed subspace, then \W is complete as well. Since \V is Hausdorff and complete, so are $\V^{\V}$ and \EndFV.

The following extension result will be very useful for our purposes. It is a particular version of \cite[Theorem 2, \S3.6, Chapter 2]{Bourbaki}.
\begin{Lemma}\label{ext_cont_linmap}
Let \V, \W be Hausdorff topological vector spaces, where \W is complete, and let $\V_0\subseteq\V$ be a dense subspace. If $f\colon\V_0\to\W$ is linear and continuous, then there exists a unique extension $\bar{f}\colon\V\to\W$ which is linear and continuous.
\end{Lemma}

\subsection{Graded algebras}\label{sec:graded:algebras}

Let \Aalg be an arbitrary algebra over a field \F, and let \G be any group. We use multiplicative notation for \G, and denote its identity by $1$. We say that \Aalg is \highlight{\G-graded} if \Aalg is endowed with a fixed vector space decomposition,
\[
\Gamma\colon\Aalg=\bigoplus_{g\in \G}\Aalg_g,
\]
where some of the subspaces $\Aalg_g$ may be $0$, and such that $\Aalg_g\Aalg_h\subseteq \Aalg_{gh}$, for all $g, h\in \G$. The subspace $\Aalg_g$ is called the \highlight{homogeneous component of degree $g$}, and the non-zero elements $x\in\Aalg_g$ are said to be \highlight{homogeneous} of degree $g$. We write $\deg x=g$ for these elements. We also write $\deg_\G$ or $\deg_\Gamma$ to make it clear that we consider the degree with respect to the grading $\Gamma$. Not all elements of \G are required to participate in the grading with corresponding nonzero homogeneous subspace, hence it is often useful to define the \highlight{support of the grading}, $\Supp\Gamma=\{g\in \G\mid\Aalg_g\ne0\}$, to single out those elements which do participate. A \G-grading is called \highlight{finite} if its support is finite.

A subspace $\mathcal{S}\subseteq\Aalg$ is \highlight{homogeneous} of degree $g$ if $\mathcal{S}\subseteq\Aalg_g$.
For an arbitrary subspace $\mathcal{S}\subseteq\Aalg$, the subspace $\mathcal{S}_g=\mathcal{S}\cap\Aalg_g$
is the (possibly zero) \highlight{homogeneous component} of $\mathcal{S}$ of degree $g$. We say that a subspace $\mathcal{S}$ is \highlight{graded} if it is the sum of all its homogeneous components, namely $\mathcal{S}=\bigoplus_{g\in \G}(\mathcal{S}\cap\Aalg_g)$.

Let $\Gamma'\colon\Balg=\bigoplus_{g\in \G}\Balg_g$ be another \G-graded algebra. A map $f\colon\Aalg\to\Balg$ is called a \highlight{homomorphism of \G-graded algebras} if $f$ is a homomorphism of algebras and $f(\Aalg_g)\subseteq\Balg_g$, for all $g\in \G$. If, moreover, $f$ is an isomorphism, we call $f$ a \highlight{\G-graded isomorphism} (or an isomorphism of \G-graded algebras), and we say that $(\Aalg,\Gamma)$ and $(\Balg,\Gamma')$ are \highlight{\G-graded isomorphic} (or isomorphic as \G-graded algebras). Two \G-gradings, $\Gamma$ and $\Gamma'$, on the same algebra \Aalg are \highlight{isomorphic} if $(\Aalg,\Gamma)$ and $(\Aalg,\Gamma')$ are isomorphic as graded algebras. In this case, we write $\Gamma\cong\Gamma'$.

If $f\colon\Aalg\to\Aalg$ is an algebra automorphism, then $f$ induces a new \G-grading on \Aalg via $\Aalg=\bigoplus_{g\in \G}\Aalg'_g$, where $\Aalg'_g=f(\Aalg_g)$. Note that $f$ will be an isomorphism between these two gradings on \Aalg.

\section{Idempotents and their immediate properties}

In this section, we introduce the notion of \B-support and use it to derive key properties of idempotents and their associated algebraic structures, highlighting their importance and utility in our study.

If \B is a basis of \V, we define the \highlight{\B-support} of $t\in\EndFV$ as the set
\[
\Supp_{\B}(t)=\{v\in\B\mid e_{vv} t e_{vv}\neq 0\}.
\]
If $\B=\{v_1,v_2,\ldots,v_n\}$ is finite, this is the subset of indices corresponding to the nonzero diagonal entries of the matrix $[t]_{\B}$ of $t$. When there is no risk of confusion, for instance when \B is clear from the context, we shall simply write $\Supp(t)$ for $\Supp_{\B}(t)$ and call it the support of $t$. We
obviously have $v\in\Supp(t)$ if, and only if, $t(v,v)\neq 0$.

\begin{Rem}
When $s$ and $t$ are triangular with respect to the well-ordered basis $(\B,\leq)$, it is straightforward to show that $e_{vv}ste_{vv}=(e_{vv}s e_{vv})(e_{vv}te_{vv})$. Hence, $(st)(v,v)=s(v,v)t(v,v)$ for all $v\in\B$, and thus $\Supp(st)=\Supp(s)\cap\Supp(t)$.
\end{Rem}

The next lemma allows us to construct idempotents with desired properties, which will be very useful later on.

\begin{Lemma}\label{JCD}
Let $t\in\EndFV$ be triangularizable and such that $\dim t(\V)<+\infty$. Then there exist polynomials  $p(\lambda),f(\lambda)\in\F[\lambda]$ without constant term such that $p(t)=0$ and $e=f(t)$ is an idempotent with $\Supp(e)=\Supp(t)$.
Consequently, $t$ is algebraic over \F, $e$ commutes with every endomorphism commuting with $t$ and stabilizes every subspace of \V stabilized by $t$.
\end{Lemma}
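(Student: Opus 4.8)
The plan is to reduce the problem to finite-dimensional linear algebra on the image $t(\V)$ and then transport the classical construction of a projection via the multiplicativity of diagonal entries. Fix a well-ordered basis $(\B,\le)$ with respect to which $t$ is triangular, and set $W=t(\V)$, which is finite-dimensional by hypothesis. Since $t(W)=t^2(\V)\subseteq t(\V)=W$, the restriction $t|_W$ is an endomorphism of a finite-dimensional space, so it admits a nonzero annihilating polynomial $q(\lambda)\in\F[\lambda]$ (for instance its characteristic polynomial), i.e. $q(t)|_W=0$. Put $p(\lambda)=\lambda\,q(\lambda)$, which has no constant term. For any $x\in\V$ we have $t(x)\in W$, hence $p(t)(x)=q(t)\bigl(t(x)\bigr)=0$; thus $p(t)=0$ and, in particular, $t$ is algebraic over $\F$.

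Next I would record the behaviour of $p$ on the diagonal. Iterating the identity $(st)(v,v)=s(v,v)\,t(v,v)$ from the Remark gives $(t^n)(v,v)=t(v,v)^n$, and therefore $P(t)(v,v)=P\bigl(t(v,v)\bigr)$ for every $P\in\F[\lambda]$ and every $v\in\B$. Applying this to $p$ shows that each diagonal value $t(v,v)$ is a root of $p$; in particular the set of distinct nonzero values among the $t(v,v)$ is finite.

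Now I would extract the idempotent by a B\'ezout argument. Write $p(\lambda)=\lambda^{a}s(\lambda)$ with $s(0)\neq0$ and $a\ge1$ (possible since $\lambda\mid p$). As $\lambda^{a}$ and $s$ are coprime in $\F[\lambda]$, choose $u,w\in\F[\lambda]$ with $u(\lambda)\lambda^{a}+w(\lambda)s(\lambda)=1$, and set $f(\lambda)=u(\lambda)\lambda^{a}$, which has no constant term, and $e=f(t)$. Since $1-e=w(t)s(t)$, we get $e(1-e)=u(t)w(t)\,t^{a}s(t)=u(t)w(t)\,p(t)=0$, so $e$ is idempotent. For the support, evaluate on the diagonal using the previous paragraph: if $t(v,v)=\lambda\neq0$ then $\lambda$ is a nonzero root of $p=\lambda^{a}s$, forcing $s(\lambda)=0$, and the B\'ezout relation at $\lambda$ gives $e(v,v)=u(\lambda)\lambda^{a}=1$; if $t(v,v)=0$ then $e(v,v)=u(0)\cdot0^{a}=0$ because $a\ge1$. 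Hence $\Supp(e)=\Supp(t)$. Finally, since $e=f(t)$ is a polynomial in $t$, it commutes with every endomorphism commuting with $t$ and maps every $t$-invariant subspace into itself, yielding the stated consequences.

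The only genuinely infinite-dimensional step is the first one: producing a single annihilating polynomial for $t$, which would fail without the finite-rank hypothesis. The key observation making it work is that $W=t(\V)$ is $t$-invariant and finite-dimensional, so Cayley--Hamilton applies to $t|_W$ and is promoted to an annihilator of $t$ on all of $\V$ after multiplying by $\lambda$. Everything after that is finite-dimensional Jordan--Chevalley/CRT bookkeeping, transported to $\EndFV$ through the multiplicativity of diagonal entries of triangular operators; I expect this reduction to be the main point, with the B\'ezout idempotent extraction and the diagonal computation being routine.
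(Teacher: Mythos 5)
Your proof is correct, and it takes a genuinely different route from the paper's. The paper leans on Mesyan's structural machinery: it invokes \cite[Proposition 6]{Mes} to produce a finite-dimensional $t$-invariant subspace containing $t(\V)$, \cite[Theorem 8(2)]{Mes} to get an annihilating polynomial that \emph{splits into linear factors} over $\F$, and \cite[Lemma 2]{Mes} to decompose $\V=\bigoplus_j\ker((t-\alpha_j I)^{m_j})$; the idempotent is then built by the Chinese Remainder Theorem, one congruence per linear factor, so that $e$ acts as $0$ or $1$ on each primary component, and $\Supp(e)=\Supp(t)$ is deduced from invariance of $t$-stable subspaces. You bypass all three citations: the observation that $W=t(\V)$ is itself $t$-invariant lets you apply Cayley--Hamilton directly to $t|_W$ and promote it to $p=\lambda q$ with $p(t)=0$, and you then separate only the factor $\lambda^{a}$ from the prime-to-$\lambda$ part $s$, so that the B\'ezout identity $u\lambda^{a}+ws=1$ produces the idempotent $e=u(t)t^{a}$ without ever needing $p$ to split --- hence without appealing to triangularizability at that stage, and uniformly over any field. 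Triangularity enters your argument only through the diagonal multiplicativity $(P(t))(v,v)=P\bigl(t(v,v)\bigr)$ from the paper's Remark, which yields a fully explicit verification that $e(v,v)=1$ when $t(v,v)\neq 0$ and $e(v,v)=0$ when $t(v,v)=0$; this is arguably sharper than the paper's one-line assertion that stabilization of $t$-invariant subspaces forces $\Supp(e)=\Supp(t)$. What the paper's route buys in exchange is the finer spectral picture --- the primary decomposition of $\V$ and the description of $e$ as the projection onto the generalized eigenspaces of the nonzero eigenvalues --- which your coarser $\lambda^{a}$-versus-$s$ splitting deliberately collapses; in fact the two constructions yield the same element $e$, since your $f=u\lambda^{a}$ and the paper's CRT solution agree modulo $p$.
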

\begin{proof}
By \cite[Proposition 6]{Mes}, since $t$ is triangularizable and $\dim t(\V)<+\infty$, there exists a finite dimensional $t$-invariant subspace $\W \subset \V$ such that $t(\V) \subset \W$. Further, by \cite[Theorem 8(2)]{Mes}, there is a polynomial $p_0(\lambda)\in\F[\lambda]$ that factors into linear terms and such that $p_0(t)$ annihilates \W. It follows that $p(\lambda)=p_0(\lambda)\lambda$ is such that $p(t)$ annihilates \V.
We may assume that $p(\lambda)=\prod_{j=1}^{k}(\lambda-\alpha_j)^{m_j}$ for distinct scalars $\alpha_1,\ldots,\alpha_{k}$, with $\alpha_k=0$, and natural numbers $m_1,\ldots,m_k$. Then, by \cite[Lemma 2]{Mes}, $\V=\bigoplus_{j=1}^k\ker((t-\alpha_j I)^{m_j})$. 

By the Chinese Remainder Theorem, there exists a polynomial $f(\lambda)\in\F[\lambda]$ satisfying the congruences
\begin{eqnarray*}
f(\lambda) &\equiv& \delta_j\mod((\lambda-\alpha_j)^{m_j})\quad (1\leq j\leq k),\\
f(\lambda) &\equiv& 0\mod (\lambda),
\end{eqnarray*}
\[\begin{cases}
f(\lambda) \equiv 1\mod((\lambda-\alpha_j)^{m_j}),& 1\leq j< k,\\
f(\lambda) \equiv 0\mod (\lambda^{m_k}),& j=k.
\end{cases}\]
Let $e=f(t)$. In view of the last congruence, $f$ has no constant term, hence the last assertions about $e$ are obviously true. The $j$-th congruence means that $e$ acts diagonally on the subspace $\ker((t-\alpha_j I)^{m_j})$ of \V as a scalar $\delta_j\in\{0,1\}$, hence this element is an idempotent. Lastly, as $e$ stabilizes every subspace stabilized by $t$, of course $\Supp(e)=\Supp(t)$, and this completes the proof.
\end{proof}

The next lemma generalizes well-known properties of idempotents in the finite dimensional algebra $\UT_n\F$. Recall that two idempotents $f,g$ in a ring $R$ are isomorphic, written $f\cong g$, if $fR\cong gR$ as right $R$-modules.

\begin{Lemma}\label{lem:iso:idemps}
Let $f,g\in\EndFV$ be idempotents
which are triangular with respect to a common well-ordered basis $(\B,\leq)$. The following are equivalent:
\begin{enumerate}
    \item\label{lem:iso:idemps:1} $\Supp_\B(f)=\Supp_\B(g)$;
    \item\label{lem:iso:idemps:4} There is an invertible element $h\in\EndFV$ also triangular with respect to $(\B,\leq)$ and such that $f=hgh^{-1}$.
    \item\label{lem:iso:idemps:5}There exist $a,b\in\EndFV$, with $a,b$ triangular with respect to \B, such that $f=ab$ and $g=ba$.
    \item\label{lem:iso:idemps:6} $f\cong g$ in \EndFV. 
\end{enumerate}
Further, if $f$ and $g$ commute and have any of the above equivalent properties, then $f=g$.
\end{Lemma}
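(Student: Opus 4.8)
The plan is to prove the four conditions equivalent by the cycle (\ref{lem:iso:idemps:1})$\Rightarrow$(\ref{lem:iso:idemps:4})$\Rightarrow$(\ref{lem:iso:idemps:5})$\Rightarrow$(\ref{lem:iso:idemps:6})$\Rightarrow$(\ref{lem:iso:idemps:1}). Two of these arrows are pure ring theory. For (\ref{lem:iso:idemps:4})$\Rightarrow$(\ref{lem:iso:idemps:5}) one sets $a=hg$, $b=gh^{-1}$, so that $ab=hg^2h^{-1}=hgh^{-1}=f$ and $ba=gh^{-1}hg=g^2=g$. For (\ref{lem:iso:idemps:5})$\Leftrightarrow$(\ref{lem:iso:idemps:6}) one invokes the classical characterization of isomorphic idempotents: given $f=ab$, $g=ba$, replace $a,b$ by $a'=fag$, $b'=gbf$ to get $a'b'=f$, $b'a'=g$ with $a'=fa'g$, $b'=gb'f$, whereupon $x\mapsto b'x$ is a right-module isomorphism $fR\to gR$ with inverse $y\mapsto a'y$; conversely, an isomorphism $\phi\colon fR\to gR$ yields $a=\phi^{-1}(g)$, $b=\phi(f)$ with $f=ab$, $g=ba$. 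One structural point must be recorded before starting, since it dictates the right ambient ring: for any $u,v\in\B$ the matrix units $e_{uu}$ and $e_{vv}$ are isomorphic rank-one idempotents in $\EndFV$ (take $a=e_{uv}$, $b=e_{vu}$), yet their supports differ. Hence the support is \emph{not} an isomorphism invariant of idempotents in the full ring $\EndFV$, and the factorizations and module isomorphism in (\ref{lem:iso:idemps:5}),(\ref{lem:iso:idemps:6}) must be understood inside the triangular algebra $\UTBV$ (equivalently, with $a,b$ triangular); this is precisely the hypothesis under which the Remark on supports of products applies, and it is what makes the cycle close.

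For (\ref{lem:iso:idemps:1})$\Rightarrow$(\ref{lem:iso:idemps:4}) I would exhibit the conjugator explicitly. Put
\[
h=fg+(I-f)(I-g).
\]
Using $f^2=f$, $g^2=g$ one checks in one line that $hg=fg=fh$, so $f=hgh^{-1}$ once $h$ is invertible. Now $h\in\UTBV$, being a sum of products of triangular elements, and by the Remark on supports of products its diagonal entries are $h(w,w)=f(w,w)g(w,w)+(1-f(w,w))(1-g(w,w))$. The diagonal entries of a triangular idempotent lie in $\{0,1\}$ and record its support, so (\ref{lem:iso:idemps:1}) forces $f(w,w)=g(w,w)$, whence $h(w,w)=1$ for every $w$. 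Therefore $n:=I-h$ is strictly triangular, so $n\in\TNil(\UTBV)$ and $n^i\to 0$ in the finite topology; in particular $n^i(w)=0$ eventually for each $w$, the partial sums of $\sum_{i\ge 0}n^i$ are eventually constant on every finite-dimensional subspace, and they converge in the complete space $\EndFV$ to an $h^{-1}$ with $(I-n)h^{-1}=I$. Since the strictly triangular elements form a closed subspace (the common kernel of the continuous maps $t\mapsto t(w,w)$), the limit shows $h^{-1}-I$ is strictly triangular, so $h^{-1}\in\UTBV$. This convergence argument is the main constructive step.

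Finally, for (\ref{lem:iso:idemps:6})$\Rightarrow$(\ref{lem:iso:idemps:1}) I would use the triangular factorization extracted above: from $f=ab$, $g=ba$ with $a,b$ triangular, the Remark on supports of products gives $f(w,w)=a(w,w)b(w,w)=b(w,w)a(w,w)=g(w,w)$ for all $w$, hence $\Supp_\B(f)=\Supp_\B(g)$. (The same diagonal computation applied to $f=hgh^{-1}$ gives (\ref{lem:iso:idemps:4})$\Rightarrow$(\ref{lem:iso:idemps:1}) directly, a useful cross-check.) The main obstacle I anticipate is exactly this return to (\ref{lem:iso:idemps:1}): because the support is invisible to isomorphism in $\EndFV$, the argument must keep all factors triangular, and for (\ref{lem:iso:idemps:4})$\Rightarrow$(\ref{lem:iso:idemps:5}) this requires knowing that an invertible triangular element has nonzero diagonal entries and a triangular inverse. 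I would establish the latter by the same geometric-series reduction as above after splitting off the (necessarily invertible) diagonal part, using well-ordered induction to rule out a vanishing diagonal entry; cf.\ \cite{Mes}.
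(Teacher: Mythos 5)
Your proposal is correct and, in outline, follows the paper's own proof: the same conjugator $h=fg+(1-f)(1-g)$ for (\ref{lem:iso:idemps:1})$\Rightarrow$(\ref{lem:iso:idemps:4}), essentially the same pair for (\ref{lem:iso:idemps:4})$\Rightarrow$(\ref{lem:iso:idemps:5}) (the paper takes $a=h$, $b=gh^{-1}$), the citation of \cite[Proposition 21.20]{Lam} for (\ref{lem:iso:idemps:5})$\Leftrightarrow$(\ref{lem:iso:idemps:6}), and the diagonal identity $\Supp_\B(ab)=\Supp_\B(ba)$ for the return to (\ref{lem:iso:idemps:1}). Two of your deviations are substantive, however. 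First, your ``structural point'' is not a caveat but a genuine correction: with (\ref{lem:iso:idemps:5}) and (\ref{lem:iso:idemps:6}) read literally in $\EndFV$, the lemma is false, since for $u\neq v$ the elements $e_{uu}=e_{uv}e_{vu}$ and $e_{vv}=e_{vu}e_{uv}$ are isomorphic triangular idempotents of $\EndFV$ with different supports. The paper's one-line argument for (\ref{lem:iso:idemps:5})$\Rightarrow$(\ref{lem:iso:idemps:1}), ``obvious, since $\Supp_\B(ab)=\Supp_\B(ba)$'', is exactly where this breaks: that identity is supplied by the Remark on diagonals of products only when $a$ and $b$ are themselves triangular. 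So the statement you prove, with the factorization and the module isomorphism taken inside $\UTBV$, is the correct (and intended) one. Second, where the paper invokes \cite[Proposition 16]{Mes} to invert $h$, you construct $h^{-1}$ by hand: your sharper computation $h(w,w)=1$ makes $1-h$ strictly triangular, hence in $\TNil(\UTBV)$ and topologically nilpotent, and the geometric series converges in the complete algebra $\EndFV$ to a triangular inverse (you should also record $h^{-1}h=1$, which follows by the same limit). This trades the citation of Proposition 16 for the $\TNil$ facts already quoted in the preliminaries. Finally, the fact you flag as needed for (\ref{lem:iso:idemps:4})$\Rightarrow$(\ref{lem:iso:idemps:5}) in the corrected reading --- a bijective triangular transformation has nonzero diagonal entries and a triangular inverse --- is precisely the content of \cite[Proposition 16]{Mes}, and your sketch (transfinite induction to exclude a vanishing diagonal entry, then diagonal-plus-geometric-series inversion) is a valid proof of it; alternatively one may simply read (\ref{lem:iso:idemps:4}) as ``$h$ is invertible in $\UTBV$'', which is what the construction in (\ref{lem:iso:idemps:1})$\Rightarrow$(\ref{lem:iso:idemps:4}) produces anyway.
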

\begin{proof}
\labelcref{lem:iso:idemps:1}$\implies$\labelcref{lem:iso:idemps:4}: Define $h=f g+(1-f)(1-g)$. This transformation is still triangular with respect to $(\B,\leq)$ and satisfies $fh=fg=hg$. Now, each $v\in\B$ is either in $\Supp_\B(f)=\Supp_\B(g)$ or in $\Supp_\B(1-f)=\Supp_\B(1-g)$, hence $h(v,v)\neq 0$ for all $v\in\B$. It follows from \cite[Proposition 16]{Mes} that $h$ is invertible and that $h^{-1}$ is also triangular with respect to $(\B,\leq)$. 

\labelcref{lem:iso:idemps:4}$\implies$\labelcref{lem:iso:idemps:5}: Take $a=h$ and $b=gh^{-1}$.

\labelcref{lem:iso:idemps:5}$\implies$\labelcref{lem:iso:idemps:1}: Obvious, since $\Supp_{\B}(ab)=\Supp_{\B}(ba)$.

\labelcref{lem:iso:idemps:5}$\iff$\labelcref{lem:iso:idemps:6}: This is \cite[Proposition 21.20]{Lam}.

The last claim is \cite[Exercise 21.21]{Lam}.
\end{proof}

If $f$ is an idempotent in a triangularizable algebra $\Ralg$, then of course the corner subalgebra $f\Ralg f$ is still a triangularizable algebra with respect to the same well-ordered basis as $\Ralg$.
The next proposition makes this observation more precise.

\begin{Prop}\label{prop:triang:corner}
Let $\Ralg$ be a triangularizable subalgebra of \EndFV with respect to the well-ordered basis $(\B,\leq)$, $\Aalg\subseteq\Ralg$ a subalgebra, and $f\in\Ralg$ an idempotent. Also, let $\B'=\Supp_{\B}(f)$ and $\V'=\Span\B'$. Then, the corner
subalgebra $\Dalg=f \Aalg f$ of \Aalg is isomorphic as a topological \F-algebra to a triangularizable subalgebra $\Dalg'$ of $\EndF\V'$. Moreover, if \Aalg is the set of all transformations in \EndFV which are triangular with respect to $(\B,\leq)$, then so is $\Dalg'$ in $\EndF\V'$ with respect to $(\B',\leq)$.
\end{Prop}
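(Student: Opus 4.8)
The plan is to reduce to the case of a \emph{diagonal} idempotent by conjugation, and then realize the resulting corner as a restriction to $\V'$. First I would record that, since $f$ is triangular and idempotent, the Remark gives $f(v,v)^2=(f^2)(v,v)=f(v,v)$, so each diagonal entry $f(v,v)$ lies in $\{0,1\}$ and $\B'=\{v\in\B\mid f(v,v)=1\}$. Let $\varepsilon\in\EndFV$ be the diagonal projection onto $\V'$ determined by $\varepsilon(v)=v$ for $v\in\B'$ and $\varepsilon(v)=0$ for $v\in\B\setminus\B'$. Then $\varepsilon$ is a triangular idempotent with $\Supp_\B(\varepsilon)=\B'=\Supp_\B(f)$, so by \Cref{lem:iso:idemps} there is an invertible $h\in\EndFV$, triangular with respect to $(\B,\le)$ and with triangular inverse, such that $f=h\varepsilon h^{-1}$.

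Next I would conjugate everything by $h$. Since $\EndFV$ is a topological algebra, the map $c\colon s\mapsto h^{-1}sh$ is a continuous algebra automorphism with continuous inverse $s\mapsto hsh^{-1}$, hence a topological automorphism of $\EndFV$. It sends $f$ to $\varepsilon$ and $\Ralg$ to $\Ralg'':=h^{-1}\Ralg h$, which is again triangular with respect to $(\B,\le)$ because the triangular transformations form a subalgebra; in particular $\varepsilon=h^{-1}fh\in\Ralg''$, so $\varepsilon\Ralg''\varepsilon$ is a genuine corner. Consequently $c$ restricts to a topological isomorphism from $\Dalg=f\Ralg f$ onto $\varepsilon\Ralg''\varepsilon$.

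It then remains to identify $\varepsilon\Ralg''\varepsilon$ with a triangularizable subalgebra of $\EndF\V'$. Every $x=\varepsilon s\varepsilon\in\varepsilon\Ralg''\varepsilon$ annihilates $\Span(\B\setminus\B')$ and maps $\V'$ into $\V'$, so restriction $\rho(x)=x|_{\V'}$ gives a map into $\EndF\V'$. I would check that $\rho$ is an injective algebra homomorphism (injectivity because $x$ is determined by $x|_{\V'}$ together with $x|_{\Span(\B\setminus\B')}=0$) and a homeomorphism onto its image $\Ralg':=\rho(\varepsilon\Ralg''\varepsilon)$: continuity of $\rho$ is immediate from pointwise convergence, while continuity of $\rho^{-1}$ follows because $x_\alpha|_{\V'}\to x|_{\V'}$ forces $x_\alpha\to x$ pointwise on all of $\V$, using that every $x_\alpha$ kills $\Span(\B\setminus\B')$. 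Finally, for $v\in\B'$ we have $x(v)=\varepsilon(s(v))\in\Span\{w\in\B'\mid w\le v\}$ since $s$ is triangular and $\varepsilon$ projects onto $\V'$; thus each $\rho(x)$ is triangular with respect to $(\B',\le)$ and $\Ralg'$ is triangularizable. Composing with $c$ yields the desired topological isomorphism $\Dalg\cong\Ralg'$.

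For the last assertion, suppose $\Ralg=\UTBV$. Then $h,h^{-1}\in\UTBV$, so $\Ralg''=h^{-1}\UTBV h=\UTBV$ by conjugation-invariance of the full triangular algebra, and it remains to see that $\rho$ is onto $\UT_{\B'}\V'$. Given any $y\in\EndF\V'$ triangular with respect to $(\B',\le)$, I would extend it to $\tilde y\in\EndFV$ by $\tilde y(v)=y(v)$ for $v\in\B'$ and $\tilde y(w)=0$ for $w\in\B\setminus\B'$; this $\tilde y$ is triangular, so $\tilde y\in\UTBV=\Ralg''$, and $\rho(\varepsilon\tilde y\varepsilon)=y$, whence $\Ralg'=\UT_{\B'}\V'$. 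I expect the main obstacle to be the purely topological bookkeeping, namely verifying that both conjugation by $h$ and the restriction map $\rho$ (together with its inverse) are homeomorphisms in the finite topology, rather than the algebraic identifications, which become routine once the reduction to the diagonal idempotent $\varepsilon$ is in place.
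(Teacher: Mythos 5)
Your proposal is correct and follows essentially the same route as the paper: both use \Cref{lem:iso:idemps} to conjugate $f$ to the diagonal projection onto $\V'$ via a triangular invertible $h$, transport the corner through the continuous inner automorphism, and prove the ``moreover'' clause by extending a triangular transformation of $\V'$ by zero and pulling it back through the conjugation. The only difference is organizational --- you conjugate all of $\Ralg$ and then restrict via an explicit map $\rho$ whose bicontinuity you verify, whereas the paper conjugates the corner elements directly and silently identifies $\EndF\V'$ with the transformations in $\EndFV$ that preserve $\V'$ and kill $\B\setminus\B'$; your extra topological bookkeeping is a welcome bit of rigor, not a divergence.
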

\begin{proof}
Let $g$ be the unique idempotent element of \EndFV such that $g(v)=v$ for all $v\in\B'$ and with kernel the $\Span(\B\setminus\B')$. Since $\Supp_{\B}(f)=\Supp_{\B}(g)$, \Cref{lem:iso:idemps} provides an invertible element $h\in\EndFV$ such that $g=h f h^{-1}$, with both $h,h^{-1}$ still triangular with respect to $(\B,\leq)$. Let $\varphi\colon\EndFV\to\EndFV$ be the continuous inner automorphism such that $\varphi(t)=hth^{-1}$.

For every $t\in \Dalg$, we have $t=ft=tf=ftf$, hence $\varphi(t)(\V)=\varphi(ft)(\V)=(g\varphi(t))(\V)\subseteq\V'$, while
$\varphi(t)(v)=\varphi(tf)(v)=(\varphi(t)g)(v)=0$ for all $v\in\B\setminus \B'$, hence $\varphi(t)\in\EndF\V'$ and this transformation is triangular with respect to $(\B,\leq)$. Here we identify $\EndF\V'$ with the subset of transformations in \EndFV which keep $\V'$ invariant and vanish on $\Span(\B\setminus\B')$. Under this identification, the element $g$ becomes the identity transformation of $\V'$. It follows that $\Dalg'=\varphi(\Dalg)$ is a triangularizable subalgebra of \EndFV with respect to $(\B,\leq)$ as well as a triangularizable subalgebra of $\EndF\V'$ with respect to $(\B',\leq)$.

Now assume that \Aalg is the set of all transformations which are triangular with respect to $(\B,\leq)$ (in which case $\Ralg=\Aalg$) and let $t\in \EndF\V'$ be triangular with respect to $(\B',\leq)$. Since $t(v)\in\Span\{u\in\B'\mid u\leq v\}$ for all $v\in\B'$ and $t(v)=0$ for all $v\in\B\setminus\B'$, it is clear that the element $s=\varphi^{-1}(t)=h^{-1} t h$ satisfies $s(v)\in\Span\{u\in\B\mid u\leq v\}$ for all $v\in\B$, so it is triangular with respect to $(\B,\leq)$, and thus it lies in \Aalg. Also $sf=h^{-1}thfh^{-1}h=h^{-1}tg h=h^{-1}th=s$, because $g$ is the identity in $\EndF\V'$, and similarly $fs=s$, hence $s\in \Dalg$. But then $t=\varphi(s)\in \Dalg'$, as required.  
\end{proof}

In view of the previous proposition, it follows that:

\begin{Cor}\label{cor:corner:UTBV}
Let $f\in\UTBV$ be an idempotent, $\B'=\Supp_{\B}(f)$ and $V'=\Span{\B'}$. Then $f(\UTBV)f\cong \UT_{\B'}V'$ as topological \F-algebras.
In particular,
if $|\Supp_{\B}(f)|=n<+\infty$, then $f(\UTBV)f\cong\UT_n\F$.\qed
\end{Cor}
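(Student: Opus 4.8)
The plan is to deduce everything directly from \Cref{prop:triang:corner}, whose hypotheses have been arranged precisely for this situation. First I would take $\Ralg=\UTBV$ and observe that, by its very definition, $\UTBV$ is the set of \emph{all} transformations in $\EndFV$ that are triangular with respect to $(\B,\leq)$. Consequently the ``Moreover'' clause of the proposition applies. With $f\in\UTBV$ an idempotent, $\B'=\Supp_{\B}(f)$ and $\V'=\Span\B'$, the proposition then yields a topological $\F$-algebra isomorphism from the corner $\Dalg=f(\UTBV)f$ onto a triangularizable subalgebra $\Ralg'$ of $\EndF\V'$, where $\Ralg'$ is, again by the ``Moreover'' clause, the full set of transformations in $\EndF\V'$ that are triangular with respect to $(\B',\leq)$. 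But this set is by definition $\UT_{\B'}\V'$, so $f(\UTBV)f\cong\UT_{\B'}\V'$ as topological $\F$-algebras, establishing the first assertion with essentially no further work.

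For the final claim I would specialize to the case $|\Supp_{\B}(f)|=n<+\infty$. Then $\V'=\Span\B'$ is $n$-dimensional and $(\B',\leq)$ is a well-ordered basis with exactly $n$ elements, say $\B'=\{w_1<\cdots<w_n\}$. Expressing endomorphisms of $\V'$ as matrices in this ordered basis, triangularity with respect to $(\B',\leq)$ is literally upper-triangularity of the associated matrix, whence $\UT_{\B'}\V'\cong\UT_n\F$ as $\F$-algebras. It remains only to note that, since $\V'$ is finite dimensional, the finite topology on $\EndF\V'$ is discrete: taking $\mathcal{W}=\V'$ in the basic neighbourhood $\mathscr{N}(\mathcal{W})$ of $0$ gives $\mathscr{N}(\V')=\{0\}$, so $\{0\}$ is open. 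Hence the topological and purely algebraic notions of isomorphism coincide in this case, and combining with the first part gives $f(\UTBV)f\cong\UT_n\F$.

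I expect no genuine obstacle here, as the entire substantive content has already been absorbed into \Cref{prop:triang:corner}. The only things that need checking are two routine bookkeeping identifications, namely $\Ralg'=\UT_{\B'}\V'$ (furnished verbatim by the ``Moreover'' clause, once one notices that $\UTBV$ is the full triangular algebra) and $\UT_{\B'}\V'\cong\UT_n\F$ in the finite case, together with the harmless remark that the finite topology degenerates to the discrete topology in finite dimension.
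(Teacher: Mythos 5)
Your proposal is correct and follows exactly the paper's route: the authors also derive the corollary immediately from \Cref{prop:triang:corner} (hence the \qed with no written proof), using that $\UTBV$ is by definition the full algebra of transformations triangular with respect to $(\B,\leq)$ so that the ``Moreover'' clause identifies the corner with $\UT_{\B'}\V'$. Your added remarks on the finite case --- the identification $\UT_{\B'}\V'\cong\UT_n\F$ via the ordered basis and the observation that the finite topology is discrete on $\EndF\V'$ when $\dim\V'<+\infty$ --- are just the routine details the paper leaves implicit.
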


Note that, if we consider an algebra \Aalg, where $\UTBVlim\subseteq\Aalg\subseteq\UTBV$, then an indempotent $f\in\Aalg$ is an element of \UTBV. Hence, we may repeat the proof of \Cref{prop:triang:corner}, and obtain:

\begin{Cor}\label{cor:corner:A}
Let \Aalg be a subalgebra of \UTBVfin containing \UTBVlim,
and let $f\in\Aalg$ be an idempotent. Then $f\Aalg f\cong\Aalg'$, where $\UT_{\to\B'}\V\subseteq\Aalg'\subseteq\UT_{\B'}\V'$ as topological \F-algebras,
and where $\B'=\Supp_{\B}(f)$ and $\V'=\Span{\B'}$. In particular,
if $|\Supp_{\B}(f)|=n<+\infty$, then $f\Aalg f\cong\UT_n\F$.\qed
\end{Cor}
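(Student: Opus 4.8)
The plan is to follow the proof of Proposition~\ref{prop:triang:corner} almost verbatim, the one genuinely new point being the lower inclusion $\UT_{\to\B'}\V\subseteq\Aalg'$, which in the present setting no longer comes for free from triangularity.

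First I would set up the conjugation exactly as in Corollary~\ref{cor:corner:UTBV}. Since $f\in\Aalg\subseteq\UTBV$ is an idempotent with $\Supp_\B(f)=\B'$, let $g\in\EndFV$ be the diagonal idempotent with $g(v)=v$ for $v\in\B'$ and kernel $\Span(\B\setminus\B')$, so that $\Supp_\B(g)=\B'=\Supp_\B(f)$. Lemma~\ref{lem:iso:idemps} then provides the invertible element $h=gf+(1-g)(1-f)$, triangular with respect to $(\B,\leq)$ along with its inverse, such that $g=hfh^{-1}$; equivalently $f=h^{-1}gh$. Let $\varphi(t)=hth^{-1}$ be the associated continuous inner automorphism of $\EndFV$. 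By Corollary~\ref{cor:corner:UTBV}, $\varphi$ restricts to a topological algebra isomorphism $f\UTBV f\cong\UT_{\B'}\V'$. Setting $\Aalg'=\varphi(f\Aalg f)$, the restriction of $\varphi$ is a topological isomorphism $f\Aalg f\cong\Aalg'$, and $f\Aalg f\subseteq f\UTBV f$ gives at once the upper bound $\Aalg'\subseteq\UT_{\B'}\V'$.

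The heart of the matter is the lower bound: I must show that every matrix unit $e_{u'v'}$ with $u'\le v'\in\B'$ lies in $\Aalg'$, that is, that $\varphi^{-1}(e_{u'v'})=h^{-1}e_{u'v'}h$ lies in $f\Aalg f$. This is where the hypothesis $\UTlim\subseteq\Aalg$ is used. Factoring $e_{u'v'}=e_{u'u'}e_{u'v'}e_{v'v'}$, I would write $h^{-1}e_{u'v'}h=(h^{-1}e_{u'u'})\,e_{u'v'}\,(e_{v'v'}h)$ and check that each of the three factors lies in $\Aalg$. The first factor has its only nonzero column equal to $h^{-1}(u')\in\V(u')$, a finite combination of basis vectors, so $h^{-1}e_{u'u'}\in\UTlim\subseteq\Aalg$; the middle factor is $e_{u'v'}\in\UTlim\subseteq\Aalg$ since $u'\le v'$. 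For the third factor the decisive observation is that $v'\in\B'=\Supp_\B(g)$ forces $e_{v'v'}g=e_{v'v'}$ and $e_{v'v'}(1-g)=0$, whence $e_{v'v'}h=e_{v'v'}\big(gf+(1-g)(1-f)\big)=e_{v'v'}f\in\Aalg$. Therefore $h^{-1}e_{u'v'}h$ is a product of three elements of $\Aalg$, hence lies in $\Aalg$, and since $f\big(h^{-1}e_{u'v'}h\big)f=h^{-1}(ge_{u'v'}g)h=h^{-1}e_{u'v'}h$ it in fact lies in $f\Aalg f$. Applying $\varphi$ gives $e_{u'v'}\in\Aalg'$, so $\UT_{\to\B'}\V\subseteq\Aalg'$, as required.

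The last assertion is then immediate: if $|\B'|=n<+\infty$, then $\V'$ is $n$-dimensional and the two bounds coincide, $\UT_{\to\B'}\V=\UT_{\B'}\V'=\UT_n\F$, forcing $\Aalg'=\UT_n\F$ and hence $f\Aalg f\cong\UT_n\F$. I expect the lower inclusion to be the only real obstacle. In Proposition~\ref{prop:triang:corner} the reverse inclusion was automatic because any $t$ triangular on $\V'$ pulls back under $\varphi^{-1}$ to a triangular element, which is admissible whenever $\Ralg$ is the full triangular algebra. Here $\Aalg$ may be a proper, non-closed subalgebra of $\UTBV$, so one cannot invoke triangularity, and the point is precisely to realize $\varphi^{-1}(e_{u'v'})$ explicitly as a product inside $\Aalg$; note that although $e_{v'v'}h$ need not lie in $\UTlim$ (its row $v'$ may be infinite), this is harmless, as it coincides with the honest element $e_{v'v'}f\in\Aalg$.
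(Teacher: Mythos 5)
Your proposal is correct and takes essentially the same route as the paper, whose entire proof of this corollary is the remark that an idempotent $f\in\Aalg$ lies in $\UTBV$, so one ``may repeat the proof of \Cref{prop:triang:corner}'' --- that is, conjugate by the triangular invertible element $h$ supplied by \Cref{lem:iso:idemps}, exactly as you do. The one step where a verbatim repetition breaks down --- the lower inclusion $\UT_{\to\B'}\V'\subseteq\Aalg'$, since membership of $h^{-1}e_{u'v'}h$ in $\Aalg$ can no longer be deduced from triangularity alone --- is precisely what your factorization $h^{-1}e_{u'v'}h=(h^{-1}e_{u'u'})\,e_{u'v'}\,(e_{v'v'}f)$ settles, and each of your three factors does lie in $\Aalg$ as claimed, so your write-up is a correct (and usefully more explicit) rendering of the argument the paper leaves implicit.
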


\section{Gradings on topological algebras}

Let \Aalg be a topological algebra. This means that \Aalg is a topological Hausdorff vector space endowed with a continuous bilinear product $(a,b)\mapsto ab$, where $\Aalg\times\Aalg$ is endowed with the product topology.

\begin{Def}
Let $\Gamma\colon\Aalg=\bigoplus_{g\in \G}\Aalg_g$ be a \G-grading on the topological algebra \Aalg, and denote by $\pi_g\colon\Aalg\to\Aalg$ the respective projection of \Aalg onto $\Aalg_g$.
\begin{enumerate}
\item We say that $\Gamma$ is \highlight{closed} if each $\Aalg_g$ is a topologically closed subspace.
\item We say that $\Gamma$ is \highlight{continuous} if each $\pi_g$ is a continuous map.
\end{enumerate}
\end{Def}

When \Aalg is finite dimensional, it is natural to consider the discrete topology, in which case every \G-grading is obviously continuous and closed. The next lemma shows that every continuous grading is closed; however, the converse need not hold. See \Cref{exem:closed:not:continuous}.

\begin{Lemma}\label{continuousgradingisclosed}
A continuous \G-grading is a closed one.
\end{Lemma}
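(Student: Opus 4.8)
The plan is to show that if $\Gamma\colon\Aalg=\bigoplus_{g\in G}\Aalg_g$ is a continuous $G$-grading, then each homogeneous component $\Aalg_g$ is topologically closed. The natural strategy is to exhibit each $\Aalg_g$ as the preimage of a closed set under a continuous map, or equivalently as the kernel of a continuous linear map, since the zero set $\{0\}$ is closed in the Hausdorff space $\Aalg$.

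First I would fix $g\in G$ and consider the projection $\pi_g\colon\Aalg\to\Aalg$ onto $\Aalg_g$, which is continuous by hypothesis. The key observation is that $\Aalg_g$ can be recovered from the projections as $\Aalg_g=\{a\in\Aalg\mid \pi_g(a)=a\}$, i.e. it is the set of fixed points of $\pi_g$. Indeed, if $a\in\Aalg_g$ then $\pi_g(a)=a$, and conversely if $\pi_g(a)=a$ then $a$ lies in the image of $\pi_g$, which is exactly $\Aalg_g$. Thus $\Aalg_g=\{a\in\Aalg\mid (\pi_g-\operatorname{id})(a)=0\}$ is the kernel of the continuous map $\pi_g-\operatorname{id}\colon\Aalg\to\Aalg$.

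Since $\Aalg$ is a topological algebra in the sense defined above, it is in particular a topological vector space, so the identity map is continuous and the difference of two continuous linear maps is continuous; hence $\pi_g-\operatorname{id}$ is continuous. Because $\Aalg$ is Hausdorff, the singleton $\{0\}$ is closed, and therefore $\Aalg_g=(\pi_g-\operatorname{id})^{-1}(\{0\})$ is closed as the preimage of a closed set under a continuous map. As $g\in G$ was arbitrary, every homogeneous component is closed, which is precisely the statement that $\Gamma$ is a closed grading.

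I do not anticipate a serious obstacle here, as the argument is essentially formal once the fixed-point description $\Aalg_g=\ker(\pi_g-\operatorname{id})$ is isolated; the only point requiring a little care is the verification that $\pi_g-\operatorname{id}$ is genuinely continuous, which relies on the continuity of the vector space operations built into the definition of a topological algebra. Alternatively, one could argue via nets: if $(a_\alpha)$ is a net in $\Aalg_g$ converging to $a\in\Aalg$, then continuity of $\pi_g$ gives $\pi_g(a)=\lim_\alpha \pi_g(a_\alpha)=\lim_\alpha a_\alpha=a$, so $a\in\Aalg_g$; this net formulation is convenient given that the ambient spaces in this paper are not assumed first-countable and nets are the working tool throughout.
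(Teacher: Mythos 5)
Your argument is correct, and it takes a slightly different (though equally formal) route than the paper. The paper also reduces everything to pulling back the closed set $\{0\}$ (closed because $\Aalg$ is Hausdorff) along continuous maps, but it uses the \emph{other} projections: it writes $\Aalg_g=\bigcap_{h\ne g}\pi_h^{-1}(0)$, an intersection of closed kernels, and so it needs the continuity of every $\pi_h$ with $h\ne g$. You instead identify $\Aalg_g$ with the fixed-point set of its own projection, $\Aalg_g=(\pi_g-\operatorname{id})^{-1}(\{0\})$, which uses only the continuity of $\pi_g$ together with the continuity of the vector space operations built into the definition of a topological algebra. Your version therefore yields the marginally sharper per-component statement that $\Aalg_g$ is closed as soon as its own projection $\pi_g$ is continuous, independently of the other projections; the paper's version, in exchange, never invokes the vector space operations at all, only the projections and the Hausdorff axiom. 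Both are complete one-line arguments, and your closing net formulation is just the same fixed-point identity rephrased: it is equally valid, with uniqueness of limits (Hausdorff) doing the work of the closedness of $\{0\}$.
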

\begin{proof}
Indeed, since the topology on \Aalg is Hausdorff, $\{0\}$ is closed. So $\pi_g^{-1}(0)=\bigoplus_{h\ne g}\Aalg_h$ is also closed. Thus, $\Aalg_g=\bigcap_{h\ne g}\pi_h^{-1}(0)$ is closed as well.
\end{proof}

Let $\Aalg_0\subseteq\Aalg$ be a dense subalgebra of a complete topological algebra, such that $\Aalg_0$ is a subalgebra and $\bar{\Aalg}_0=\Aalg$ (the topological closure). If $\Gamma$ is a continuous \G-grading on $\Aalg_0$, then the projections $\pi_g\colon\Aalg_0\to\Aalg_0$ are continuous. Hence, from \Cref{ext_cont_linmap}, there exists a unique continuous extension $\bar{\pi}_g\colon\Aalg\to\Aalg$.

\begin{Lemma}\label{extensioncontinuous}
In the above notation, if the grading is finite, then 
$\{\bar{\pi}_g\mid g\in \G\}$ is a complete set of projections of \Aalg. Moreover, $\Aalg=\bigoplus_{g\in \G}\bar{\pi}_g\left(\Aalg\right)$ is a continuous \G-grading on \Aalg, and $\bar{\pi}_g(\Aalg)=\overline{\pi_g(\Aalg_0)}$.
\end{Lemma}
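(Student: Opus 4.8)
The plan is to transfer the defining relations of the family $\{\pi_g\}_{g\in G}$ from the dense subalgebra $\Aalg_0$ to the extensions $\{\bar{\pi}_g\}_{g\in G}$ by a density argument, using crucially that the grading is finite so that only finitely many projections are nonzero. The guiding principle is that two continuous maps into the Hausdorff space $\Aalg$ which agree on the dense subset $\Aalg_0$ must coincide; equivalently, the continuous extension furnished by \Cref{ext_cont_linmap} is unique.

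First I would establish that $\{\bar{\pi}_g\}$ is a complete family of pairwise orthogonal idempotents. Since $\Supp\Gamma$ is finite, on $\Aalg_0$ we have the finite-sum identity $\sum_{g\in G}\pi_g=\mathrm{id}_{\Aalg_0}$. The map $\sum_{g}\bar{\pi}_g$ is a finite sum of continuous maps, hence continuous, and it agrees with $\mathrm{id}_{\Aalg}$ on $\Aalg_0$; by uniqueness of the continuous extension it equals $\mathrm{id}_{\Aalg}$. In the same spirit, because $\pi_h$ maps $\Aalg_0$ into $(\Aalg_0)_h\subseteq\Aalg_0$, the composite $\bar{\pi}_g\bar{\pi}_h$ restricts on $\Aalg_0$ to $\pi_g\pi_h=\delta_{g,h}\pi_g$; as both $\bar{\pi}_g\bar{\pi}_h$ and $\delta_{g,h}\bar{\pi}_g$ are continuous and agree on $\Aalg_0$, we conclude $\bar{\pi}_g\bar{\pi}_h=\delta_{g,h}\bar{\pi}_g$. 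This yields the first assertion, and the standard argument for a finite complete orthogonal family of idempotents gives the internal direct sum decomposition $\Aalg=\bigoplus_{g}\bar{\pi}_g(\Aalg)$, where I set $\Aalg_g:=\bar{\pi}_g(\Aalg)$.

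The hard part, and the step that genuinely uses continuity of the product, will be verifying $\Aalg_g\Aalg_h\subseteq\Aalg_{gh}$. Here I would introduce, for each triple $g,h,k$ with $k\neq gh$, the map $\Phi_{g,h,k}\colon\Aalg\times\Aalg\to\Aalg$, $\Phi_{g,h,k}(x,y)=\bar{\pi}_k\bigl(\bar{\pi}_g(x)\,\bar{\pi}_h(y)\bigr)$, which is continuous because multiplication and all the $\bar{\pi}$'s are. On $\Aalg_0\times\Aalg_0$ one has $\bar{\pi}_g(x)=\pi_g(x)$ and $\bar{\pi}_h(y)=\pi_h(y)$ lying in $\Aalg_0$, so their product lies in $(\Aalg_0)_{gh}$ and is annihilated by $\pi_k=\bar{\pi}_k$; thus $\Phi_{g,h,k}$ vanishes on $\Aalg_0\times\Aalg_0$. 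Since $\Aalg_0\times\Aalg_0$ is dense in $\Aalg\times\Aalg$ for the product topology and $\Aalg$ is Hausdorff, $\Phi_{g,h,k}\equiv0$. Consequently, for $a\in\Aalg_g$ and $b\in\Aalg_h$ we have $a=\bar{\pi}_g(a)$ and $b=\bar{\pi}_h(b)$, so $\bar{\pi}_k(ab)=\Phi_{g,h,k}(a,b)=0$ whenever $k\neq gh$; summing then gives $ab=\sum_k\bar{\pi}_k(ab)=\bar{\pi}_{gh}(ab)\in\Aalg_{gh}$, which is the grading condition. Continuity of the resulting grading is immediate, since its projections are precisely the continuous maps $\bar{\pi}_g$.

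Finally, for the identity $\bar{\pi}_g(\Aalg)=\overline{\pi_g(\Aalg_0)}$, I would note that $\bar{\pi}_g(\Aalg)$ is the fixed-point set $\{x\mid\bar{\pi}_g(x)=x\}$ of the continuous idempotent $\bar{\pi}_g$, hence closed, and it contains $\pi_g(\Aalg_0)$, which gives the inclusion $\supseteq$; for $\subseteq$, any element $\bar{\pi}_g(x)$ is the limit of $\bar{\pi}_g(x_\alpha)=\pi_g(x_\alpha)\in\pi_g(\Aalg_0)$ along a net $x_\alpha\to x$ in $\Aalg_0$. The main obstacle I anticipate is not any single computation but keeping the density and Hausdorff-uniqueness argument honest in the multiplicative step — in particular justifying density of $\Aalg_0\times\Aalg_0$ in the product topology — and making visible exactly where finiteness of the support is indispensable, namely in turning $\sum_g\pi_g=\mathrm{id}$ into a finite sum of continuous maps and in obtaining an honest finite direct sum rather than a merely topological one.
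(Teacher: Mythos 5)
Your proposal is correct and follows essentially the same route as the paper: both transfer the identities $\pi_g\pi_h=\delta_{g,h}\pi_g$, $\sum_{g}\pi_g=\mathrm{id}$, and the multiplicative compatibility from the dense subalgebra $\Aalg_0$ to the extensions $\bar{\pi}_g$ using continuity, Hausdorffness, and density of $\Aalg_0\times\Aalg_0$ in $\Aalg\times\Aalg$, with finiteness of the support entering exactly where you place it. The only differences are cosmetic: the paper verifies each identity by explicit net computations (and its Remark immediately after the proof formalizes precisely your ``identities pass to continuous extensions'' principle), and it obtains closedness of $\bar{\pi}_g(\Aalg)$ by appealing to \Cref{continuousgradingisclosed} rather than to your fixed-point-set observation.
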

\begin{proof}
Let $x\in\Aalg$ and $(x_\alpha)_{\alpha\in J}$ be a net on $\Aalg_0$ converging to $x$. Then, by definition, $\bar{\pi}_g(x)=\lim\pi_g(x_\alpha)$. First, from the continuity of $\bar{\pi}_g$, we have
$$
\bar{\pi}^2_g(x)=\lim\pi_g^2(x_\alpha)=\lim\pi_g(x_\alpha)=\bar{\pi}_g(x),
$$
so, $\bar{\pi}_g^2=\bar{\pi}_g$, for each $g\in \G$. Also, whenever $g\neq h$, we have
$$
\bar{\pi}_g\bar{\pi}_h(x)=\lim\pi_g\pi_h(x_\alpha)=\lim0=0,
$$
hence $\bar{\pi}_g\bar{\pi}_h=0$. Finally, since the grading is finite,
$$
\sum_{g\in \G}\bar{\pi}_g(x)=\lim\sum_{g\in \G}{\pi}_g(x_\alpha)=\lim x_\alpha=x,
$$
thus, $\sum_{g\in \G}\bar{\pi}_g=1$. Hence, $\{\bar{\pi}_g\mid g\in \G\}$ is a complete set of projections of \Aalg. Let us show that the corresponding vector space decomposition, namely $\Aalg=\bigoplus_{g\in \G}\bar{\pi}_g(\Aalg)$, is a \G-grading on \Aalg. Let $g,h\in \G$, $x,y\in\Aalg$, and $\{(x_\alpha,y_\alpha)\}_{\alpha\in J}$ be a net on $\Aalg_0\times\Aalg_0$ converging to $(x,y)$ (with respect to the product topology). We have,
\begin{align*}
\bar{\pi}_g(x)\bar{\pi}_h(y)&=\pi_g(\lim x_\alpha)\pi_h(\lim y_\alpha)=(\lim\pi_g(x_\alpha))(\lim\pi_h(y_\alpha))\\
&=\lim\pi_g(x_\alpha)\pi_h(y_\alpha)=\lim\pi_{gh}(x_\alpha y_\alpha)=\bar{\pi}_{gh}(\lim x_\alpha\cdot y_\alpha)\\&=\bar{\pi}_{gh}(xy).
\end{align*}

By construction, each $\bar{\pi}_g$ is a continuous map, so the \G-grading is a continuous one. By construction, $\bar{\pi}_g(\Aalg)$ is obtained from limits of nets in $\pi_g(\Aalg_0)$, so $\bar{\pi}_g(\Aalg)\subseteq\overline{\pi_g(\Aalg_0)}$. On the other hand, clearly $\pi_g(\Aalg_0)\subseteq\bar{\pi}_g(\Aalg)$. Since the grading defined by the $\{\bar{\pi}_g\}$ is continuous, it is also closed (\Cref{continuousgradingisclosed}). Thus, $\overline{\pi_g(\Aalg_0)}\subset\bar{\pi}_g(\Aalg)$, so equality holds.
\end{proof}

\begin{Rem}
Another way to prove that $\Aalg=\bigoplus_{g\in \G}\bar{\pi}_g(\Aalg)$  defines a grading is  as follows. First, let $f_1,\dots,f_m\colon\Aalg_0\to\Aalg_0$ be continuous linear maps, $\bar{f}_1,\dots,\bar{f}_m\colon\Aalg\to\Aalg$ their respective linear extensions, and $p=p(x_1,\ldots,x_m)\in\F\langle x_1,\ldots,x_m\rangle$ be such that $p(f_1,\ldots,f_m)=0$, where $\F\langle x_1,\ldots,x_m\rangle$ is the free associative algebra of rank $m$. Given $x\in\Aalg$, let $(x_\alpha)_{\alpha\in J}$ be a net in $\Aalg_0$ converging to $x$, then
$$
p(\bar{f}_1,\ldots,\bar{f}_m)x=\lim_\alpha p(f_1,\ldots,f_m)x_\alpha=0.
$$
Hence, $p(\bar{f}_1,\ldots,\bar{f}_m)=0$. Now, assume that $\Aalg_0=\bigoplus_{i=1}^m\pi_{g_i}(\Aalg_0)$ is a finite \G-grading on $\Aalg_0$.
Then, $\{\pi_{g_i}\mid i=1,\ldots,m\}$ satisfies the polynomials
$$
p_{ij}(x_i,x_j)=x_ix_j-\delta_{ij}x_i,\quad p(x_1,\ldots,x_m)=x_1+x_2+\cdots+x_m-1.
$$
Thus, the set of extensions $\{\bar{\pi}_{g_i}\mid i=1,\ldots,m\}$ satisfies the same polynomials, so it constitutes a (vector space) \G-grading on \Aalg.
\end{Rem}

As mentioned above, \EndFV is a complete topological space and $\UTBV\subseteq\EndFV$ is a closed subalgebra. Thus, \UTBV is complete as well. Hence, every finite continuous grading on \UTBVlim extends to a continuous grading on \UTBV. The next example shows that we cannot always do without the finiteness condition.

\begin{Example}
Let $\B=\{v_i\mid i\in\NN\}$ and, for simplicity, denote $e_{v_iv_j}$ just by $e_{ij}$. Let $\G=\ZZ$ be the free abelian group with 1 generator, and let $\gamma\colon\B\to\ZZ$ be defined by 
$$
\gamma(v_i)=\displaystyle\frac{i(i+1)}2.
$$
Then, $\gamma$ defines a good \ZZ-grading on \UTBVlim if we set $\deg e_{1i}=\gamma(v_i)$ (see \Cref{sec_elem_grad}), which is equivalent to setting $\deg e_{i,i+1}=i+1,\,\forall i\in\NN$.
Denote, as before, $\pi_i\colon\UTBVlim\to\UTBVlim$ the respective projections, and let $\bar{\pi}_i\colon\UTBV\to\UTBV$ be the continuous extensions. Then $\{\bar{\pi}_i\mid i\in\NN\}$ is a set of pairwise orthogonal idempotents. However, it is not true that their image sums to the whole space \UTBV. Indeed, let $a=\sum_{i\in\NN}e_{i,i+1}$. Then,
$$
\bar{\pi}_j(a)=\lim_i\pi_j(\sum_{k=1}^ie_{k,k+1}).
$$
The term inside the limit equals $e_{j,j+1}$ for $i\ge j$ and $0$ otherwise. Hence, $\bar{\pi}_j(a)=e_{j,j+1}\ne0$, for all $j\in\NN$. Thus,
$$
a\notin\bigoplus_{j\in\NN}\bar{\pi}_j(\UTBV).
$$
Therefore, the grading defined by $\gamma$ on \UTBVlim does not admit an extension to a \ZZ-grading on \UTBV.
\end{Example}

\noindent\textbf{Notation.} Let $\Aalg_0\subseteq\Aalg$ be a dense subalgebra, where \Aalg is complete. If $\Gamma$ is a continuous \G-grading on $\Aalg_0$, we denote by $\overline{\Gamma}$ its extension to \Aalg.

\begin{Rem}\label{cont_iso}
If $\Gamma\cong\Gamma'$ by a continuous isomorphism, then $\overline{\Gamma}\cong\overline{\Gamma'}$.
\end{Rem}

Now, let us study a special kind of group grading. Let \V be a vector space with a basis \B. We equip \EndFV with the finite topology. We let $\Aalg=\Aalg(I)\subseteq\EndFV$ be a subalgebra such that there exists $I\subseteq\B\times\B$ satisfying:
\begin{enumerate}
\item $(v,v)\in I$, for all $v\in\B$,
\item $\Aalg(I)=\Span\{e_{uv}\mid(u,v)\in I\}$.
\end{enumerate}
Note that, since $\Aalg(I)$ is an algebra, the relation defined by $I$ is in fact a preorder on \B. For instance, if \V is finite-dimensional, then examples of such algebra include the full matrix algebras, the upper triangular ones, and more generally the incidence algebras of a finite set equipped with a pre-ordering. If \B is well-ordered and $I=\{(x,y)\in \B\times \B\mid x\le y\}$, then we obtain \UTBVlim.

The algebra $\Aalg=\Aalg(I)$ is known in the literature as a \highlight{structural matrix algebra} when $I$ is finite, and the notion of a good grading applies to it as well:
\begin{Def}\label{def:good:grading:A(I)}
A \G-grading on \Aalg is called \highlight{good} if every $e_{xy}$, with $(x,y)\in I$, is homogeneous in the grading.
\end{Def}

First, we shall prove that every good grading is a continuous.
\begin{Prop}\label{goodiscontinuous}
Let \G be a group and $\Gamma$ a good \G-grading on \Aalg. Then $\Gamma$ is continuous.
\end{Prop}
\begin{proof}
Consider $\Gamma\colon\Aalg=\bigoplus_{g\in \G}\Aalg_g$ and let $\pi_g$ be the projections. Let $\{r_\alpha\}_{\alpha\in\mathcal{I}}$ be a net on \Aalg converging to $r\in\Aalg$. Let $S=\{x_1,\ldots,x_m\}\subseteq\B$. Write
$$
I=I_g\dot\cup I_{\ne g}\dot\cup I_r,
$$
where
$$
I_g=\{(x,y)\in I\mid e_{xy}\in\Aalg_g,\,y\in S\},\quad I_{\ne g}=\{(x,y)\in I\setminus I_g\mid y\in S\},
$$
and the remaining $I_r=I\setminus(I_g\cup I_{\ne g})$. Then, if
$$
r=\sum_{(x,y)\in I_g}\lambda_{xy}e_{xy}+\sum_{(x,y)\in I_{\ne g}}\lambda_{xy}e_{xy}+\sum_{(x,y)\in I_r}\lambda_{xy}e_{xy},
$$
then
$$
\pi_g(r)=\sum_{(x,y)\in I_g}\lambda_{xy}e_{xy}+\sum_{(x,y)\in I_r}\lambda'_{xy}e_{xy}.
$$
Denote $\mathscr{N}(r,S)=\{z\in\EndFV\mid z(x)=r(x),\forall x\in S\}$ at the elements forming a basis of the topology of \EndFV. Then, note that
$$
z\in\mathscr{N}(r,S)\cap\Aalg\iff z\equiv \sum_{(x,y)\in I_g}\lambda_{xy}e_{xy}+\sum_{(x,y)\in I_{\ne g}}\lambda_{xy}e_{xy}\pmod{\mathrm{Span}\,I_r},
$$
and
$$
z\in\mathscr{N}(\pi_g(r),S)\cap\Aalg\iff z\equiv \sum_{(x,y)\in I_g}\lambda_{xy}e_{xy}\pmod{\mathrm{Span}\,I_r}.
$$
Hence, $\pi_g(\mathscr{N}(r,S))\subseteq\mathscr{N}(\pi_g(r),S)$. Thus, $\pi_g$ is continuous.
\end{proof}

Now, let $\Ualg$ be an algebra, where $\Aalg(I)\subseteq\Ualg\subseteq\overline{\Aalg(I)}$, where $\overline{\Aalg(I)}$ is the topological closure of $\Aalg(I)$ in \EndFV.
\begin{Def}\label{def:good:grading:U}
A \G-grading on $\Ualg$ is called \highlight{good} if every $e_{xy}$ is homogeneous in the grading, for all $(x,y)\in I$.
\end{Def}
Let $\Ualg=\bigoplus_{g\in \G}\Ualg_g$ be a finite good \G-grading. Then $\Aalg_g:=\Aalg\cap\Ualg_g$ defines a good \G-grading on \Aalg. This extends to a \G-grading on $\overline{\Aalg}$ via $\bigoplus_{g\in \G}\overline{\Aalg\cap\Ualg_g}$. It is not entirely obvious that $\overline{\Aalg_g}\cap\Ualg=\Ualg_g$. However, as the next \namecref{prop:ualg:closed} proves, every $\Ualg_g$ is a closed subspace. 
\begin{Prop}\label{prop:ualg:closed}
A good \G-grading on $\Ualg$ is closed.
\end{Prop}
\begin{proof}
Consider the good \G-grading $\Ualg=\bigoplus_{g\in \G}\Ualg_g$. First, note that if $x\in\Ualg$ is homogeneous and $e_{ii}xe_{jj}\ne0$, then $\deg_\G x=\deg e_{ij}$. So, let $g\in \G$ and $(x_{\alpha})_{\alpha\in J}\subseteq\Ualg_g$ be a net converging to an $x\in\Ualg$. Write $x=\sum_{h\in \G}x_h$. Let $(i,j)\in I$ be such that $e_{ii}x_he_{jj}\ne0$, for some $h$. Since the multiplication is continuous, $(e_{ii}x_\alpha e_{jj})_{\alpha\in J}$ is a net converging to $e_{ii}xe_{jj}=\sum_{h\in \G}e_{ii}x_he_{jj}$. However, $(e_{ii}x_\alpha e_{jj})_{\alpha\in J}$ is eventually constant (since $e_{ii}\Ualg e_{jj}=\F e_{ij}$, whose topology is discrete). Thus, for some $\alpha\in J$, $e_{ii}x_\alpha e_{jj}=\sum_{h\in \G}e_{ii}x_he_{jj}$. Since this equation involves only homogeneous elements, we should have
$$
g=\deg_G e_{ii}x_\alpha e_{jj}=\deg_G(e_{ii}x_he_{jj})=h.
$$
Therefore, $x\in\Ualg_g$.
\end{proof}

\begin{Cor}\label{inducedfromgood}
If \G is finite, then every good grading on $\bar{\Aalg}$ is induced from a good grading on \Aalg.
\end{Cor}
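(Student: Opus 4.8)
The plan is to show that every good $G$-grading on $\overline{\Aalg}$ restricts to a good grading on $\Aalg$ and then verify that the original grading is recovered as the extension of that restriction. Given a good grading $\Gamma\colon\overline{\Aalg}=\bigoplus_{g\in G}\overline{\Aalg}_g$, each matrix unit $e_{xy}$ with $(x,y)\in I$ is homogeneous, and since these matrix units all lie in $\Aalg$, the decomposition $\Aalg=\bigoplus_{g\in G}(\Aalg\cap\overline{\Aalg}_g)$ is a $G$-grading on $\Aalg$ in which every $e_{xy}$ is homogeneous, hence a good grading $\Gamma_0$ on $\Aalg$. First I would record that this restricted grading is continuous by \Cref{goodiscontinuous}, and that its support is contained in $\Supp\Gamma$, which is finite because $G$ is finite; therefore $\Gamma_0$ is a finite continuous grading on the dense subalgebra $\Aalg\subseteq\overline{\Aalg}$.

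Next I would invoke \Cref{extensioncontinuous}: since $\overline{\Aalg}$ is complete and $\Aalg$ is a dense subalgebra, the finite continuous grading $\Gamma_0$ extends to a continuous grading $\overline{\Gamma_0}\colon\overline{\Aalg}=\bigoplus_{g\in G}\overline{\pi}_g(\overline{\Aalg})$, where the $\pi_g$ are the projections of $\Gamma_0$ and $\overline{\pi}_g(\overline{\Aalg})=\overline{\pi_g(\Aalg)}=\overline{\Aalg\cap\overline{\Aalg}_g}$. The remaining task is to identify this extension with the original grading $\Gamma$, i.e.\ to prove $\overline{\Aalg}_g=\overline{\pi}_g(\overline{\Aalg})$ for every $g$. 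One inclusion is immediate: $\overline{\pi}_g(\overline{\Aalg})=\overline{\Aalg\cap\overline{\Aalg}_g}\subseteq\overline{\overline{\Aalg}_g}=\overline{\Aalg}_g$, where the last equality uses that $\Gamma$ is a \emph{good} grading and hence closed by the preceding theorem, so each $\overline{\Aalg}_g$ is already topologically closed.

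For the reverse inclusion I would argue that $\sum_{g}\overline{\pi}_g$ is the identity on $\overline{\Aalg}$, which holds because $\{\overline{\pi}_g\mid g\in G\}$ is a complete set of orthogonal projections by \Cref{extensioncontinuous}. Take any homogeneous $x\in\overline{\Aalg}_h$; then $x=\sum_{g}\overline{\pi}_g(x)$ with each $\overline{\pi}_g(x)\in\overline{\Aalg}_g$. Comparing this with the decomposition of $x$ in the grading $\Gamma$ (where $x$ sits purely in degree $h$) and using that $\Gamma$ is a direct sum, I would conclude $\overline{\pi}_g(x)=0$ for $g\neq h$ and $\overline{\pi}_h(x)=x$, so $x\in\overline{\pi}_h(\overline{\Aalg})$; extending linearly gives $\overline{\Aalg}_h\subseteq\overline{\pi}_h(\overline{\Aalg})$.

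**The main obstacle** I anticipate is precisely the matching of the two direct-sum decompositions in this last step: a priori $\overline{\pi}_g$ is defined intrinsically from $\Gamma_0$ by continuity, and there is no formal reason for it to coincide with the $\Gamma$-projection onto $\overline{\Aalg}_g$. The clean way around this is to verify that the two families of projections agree on the dense subalgebra $\Aalg$ (where both restrict to the same good grading $\Gamma_0$), and then use that a continuous map into a Hausdorff space is determined by its values on a dense set, together with the fact that both gradings are closed, to upgrade this agreement to all of $\overline{\Aalg}$. This reduces the whole corollary to the uniqueness clause of \Cref{ext_cont_linmap} applied to each projection.
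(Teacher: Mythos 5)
Your proof is correct and follows essentially the same route as the paper: restrict $\Gamma$ to a good grading $\Gamma_0$ on $\Aalg$, use closedness of good gradings to get $\overline{\Aalg\cap\overline{\Aalg}_g}\subseteq\overline{\Aalg}_g$, extend $\Gamma_0$ via \Cref{extensioncontinuous} (justified by \Cref{goodiscontinuous}), and identify the two direct-sum decompositions. Note only that the ``obstacle'' you raise at the end is not a genuine one---the comparison of the two direct sums in your third paragraph already settles the matter purely algebraically---whereas your proposed alternative fix via the uniqueness clause of \Cref{ext_cont_linmap} would be circular, since continuity of the $\Gamma$-projections on $\overline{\Aalg}$ is only established as a consequence of this very corollary.
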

\begin{proof}
Let $\Gamma\colon\bar{\Aalg}=\bigoplus_{g\in \G}\Ualg_g$ be a finite good \G-grading on $\bar{\Aalg}$. Then, $\Aalg_g=\Ualg_g\cap\Aalg$ defines a good \G-grading $\Gamma_0$ on \Aalg. Since each $\Ualg_g$ is closed, one has $\Ualg_g\supseteq\bar{\Aalg_g}$. However, by  \Cref{goodiscontinuous}, $\overline{\Aalg}=\bigoplus_{g\in \G}\bar{\Aalg}_g$. Thus, $\Gamma=\overline{\Gamma_0}$.
\end{proof}

We proved that every good grading on \Aalg is continuous (\Cref{goodiscontinuous}). Also, an extension of a continuous grading is continuous (\Cref{extensioncontinuous}). Hence, as a consequence of  \Cref{inducedfromgood}, we also prove:

\begin{Cor}\label{cor:good:grading:finite:support}
Every good grading with finite support on $\bar{\Aalg}$ is continuous.\qed
\end{Cor}

Specializing the previous corollaries to our area of interest, we can state:

\begin{Cor}\label{goodclosedinduced}
Let $\mathcal{U}$ be an algebra, $\UTBVlim\subseteq\mathcal{U}\subseteq\UTBV$. Then, every finite good grading on $\mathcal{U}$ is continuous and induced from a finite good grading on \UTBVlim.\qed
\end{Cor}

\section{Classification of group gradings on triangularizable algebras}

In this section, we present our main results on the classification and properties of group gradings on upper triangular matrix algebras. We begin by demonstrating the equivalence of good and elementary gradings, and extend our analysis to infinite-dimensional settings.

In \Cref{sec_elem_grad}, we establish a correspondence between elementary \G-gradings on \UTBVlim and maps from $\bar{\B}=\B\setminus\{1\}$ to \G, where $1$ is the least element of \B. We define $\gamma_\Gamma(v) = \deg_\Gamma e_{1v}$ for all $v \in \bar{\B}$. This construction leads to a well-defined elementary \G-grading on \UTBVlim. We also prove that every good grading on \UTBV and \UTBVfin is finite.

In \Cref{sec_Prim_idemp}, we investigate homogeneous idempotents in arbitrary group gradings on \UTBV and show that any group grading is isomorphic to a good grading. We discuss the isomorphism classes of elementary group gradings, providing key insights into their classification.

The final subsection summarizes our main findings and explores the implications of our results, including a discussion on fine group gradings.

\subsection{Elementary grading\label{sec_elem_grad}}

We shall prove that there is a correspondence between elementary \G-gradings on \UTBVlim and maps $\bar{\B}\to \G$, where $\bar{\B}$ is obtained from \B by removing its first element (denoted here by $1$). As mentioned before, if $\Gamma$ is an elementary \G-grading on \UTBVlim, we define $\gamma_\Gamma(v)=\deg_\Gamma e_{1v}$, $\forall v\in\bar{\B}$.

Conversely, let $\gamma\colon\bar{\B}\to \G$ be a map. For convenience, we define $\gamma(1)=1$. Given $u\le v$ in \B, let
$$
\deg_Ge_{uv}=\gamma(u)^{-1}\gamma(v).
$$
Then, we obtain a well-defined elementary \G-grading on \UTBVlim. Indeed, first, $\UTBVlim=\mathrm{Span}\{e_{uv}\mid u\le v\in\B\}$, thus we obtain a vector space \G-grading on \UTBVlim. Given $u\le v\le w$, the equation $e_{uw}=e_{uv}e_{vw}$ is compatible with the vector space grading since
$$
\deg_Ge_{uv}\deg_Ge_{vw}=\gamma(u)^{-1}\gamma(v)\gamma(v)^{-1}\gamma(w)=\gamma(u)^{-1}\gamma(w)=\deg_Ge_{uw}.
$$
Then, we obtain a \G-grading on \UTBVlim, denoted by $\Gamma(\gamma)$. Clearly, by construction, the associated map is $\gamma_{\Gamma(\gamma)}=\gamma$. Thus, we obtain a bijective correspondence between the elementary \G-gradings and maps $\bar{\B}\to \G$.

We close this subsection with the following elementary remarks. Firstly, a \G-grading on \UTBVlim is good if and only if it is elementary. Indeed, an elementary grading is good by definition. Conversely, given a good \G-grading on \UTBVlim, we may define $\gamma\colon\B\to \G$ via
$$
\gamma(v)=\deg_\Gamma e_{1v}.
$$
Since $e_{1v}=e_{1u}e_{uv}$, we see that $\deg_\Gamma e_{uv}=\gamma(u)^{-1}\gamma(v)$. Therefore, a good grading may be described in terms of an elementary one.

Given $\gamma\colon\B\to \G$ and $g\in \G$, we define $g\gamma$ to be the map $g\gamma\colon\B\to \G$ such that $(g\gamma)(u)=g(\gamma(u))$, for all $u\in\B$. It is clear that $\gamma$ and $g\gamma$ define the same elementary grading on $\UTBlim$. We shall prove that the converse holds:

\begin{Lemma}\label{isomorphicelementarysequence}
Let $\eta$ and $\eta'$ be maps defining elementary \G-gradings $\Gamma(\eta)$ and $\Gamma(\eta')$ on $\UTBlim$. If $\Gamma(\eta)=\Gamma(\eta')$, then there exists $g\in \G$ such that $\eta=g\eta'$.
\end{Lemma}
\begin{proof}
Let $g=\eta(1)(\eta'(1))^{-1}$, where $1\in\B$ is the first element. Let $u\in\B$. Since $\Gamma(\eta)=\Gamma(\eta')$, one has
$$
\underbrace{\deg_{\Gamma(\eta)}e_{1u}}_{\eta(1)^{-1}\eta(u)}=\underbrace{\deg_{\Gamma(\eta')}e_{1u}}_{\eta'(1)^{-1}\eta'(u)}.
$$
Hence, $\eta(u)=g\eta'(u)$ for each $u\in\B$.
\end{proof}

Clearly, if we impose $\eta(1)=1=\eta'(1)$, then $\Gamma(\eta)=\Gamma(\eta')$ if and only if $\eta=\eta'$.

Finally, a \G-grading $\Gamma$ on \UTBVlim is good if every $e_{vv}$ is homogeneous in the grading. Indeed, for $u\le v$, $\Span\,e_{uv}=e_{uu}(\UTBVlim) e_{vv}$, thus each $e_{uv}$ is homogeneous.

All of the above discussion holds for group gradings on \UTBV as well as on any of its subalgebras containing \UTBVlim. In other words, we say that a \G-grading $\Gamma$ on \Aalg, where $\UTBVlim\subseteq\Aalg\subseteq\UTBV$, is \highlight{elementary} if it is good and there exists a map $\gamma\colon\B\to \G$ satisfying $\deg_\Gamma e_{uv}=\gamma(u)^{-1}\gamma(v)$, $\forall u,v\in\B$. The same argument as before shows that every good grading is an elementary grading. Now, we prove that every good grading on \UTBV and on \UTBVfin is finite.

\begin{Lemma}\label{lemma:goodgradingfinite}
Let $\Gamma$ be a good \G-grading on \Aalg, where $\UTBVfin\subseteq\Aalg\subseteq\UTBV$. Then $\Gamma$ is finite.
\end{Lemma}
\begin{proof}
From the previous discussion, we may assume that $\Gamma$ is elementary. Hence, there exists a map $\gamma\colon\B\to \G$ such that $\deg_\Gamma e_{uv}=\gamma(u)^{-1}\gamma(v)$, for each $u$, $v\in\B$. Let
$$
x=\sum_{v\in\B}e_{1v}\in\UTBVfin\subseteq\Aalg,
$$
where $1$ is the first element of \B. Denote $x=x_{g_1}+\cdots+x_{g_m}$, sum of homogeneous elements. Then, for each $v\in\B$, we have $e_{1v}=e_{11}xe_{vv}=\sum_{i=1}^me_{11}x_{g_i}e_{vv}$. Hence, comparing homogeneous degrees, one has
$$
\gamma(1)^{-1}\gamma(v)=\deg_\Gamma e_{1v}\in\{g_1,\ldots,g_m\}.
$$
Therefore, $\gamma(v)\in\{\gamma(1)g_1,\ldots,\gamma(1)g_m\}$, for all $v\in\B$. Thus, the image of $\gamma$ is a finite set, and $\Gamma$ is finite.
\end{proof}

\subsection{Primitive Homogeneous Idempotents\label{sec_Prim_idemp}}

Idempotents play a central role in this work. In this subsection, we establish several key properties of these elements, including the existence of primitive homogeneous idempotents in any \G-graded subalgebra \Aalg with $\UTBVlim \subseteq \Aalg \subseteq \UTBV$.

We begin by obtaining detailed information on idempotents derived from the least element of a well-ordered basis $(\B,\leq)$. This covers the structure of the associated algebraic entities without assuming that the ambient algebra \UTBV is graded.

\begin{Lemma}\label{lem:structure:corner:prim:idemp}
Let $(\B,\leq)$ be a well-ordered basis of \V and let \Aalg be a subalgebra of \UTBV. Assume that $\Gamma:\Aalg=\bigoplus_{g\in \G}\Aalg_g$ is a grading by a group \G. If $v\in\B$ is the least element and $e_{vv}\in\Aalg$, then the following holds:
\begin{enumerate}
\item\label{lem:structure:corner:prim:idemp:A} There exists a primitive idempotent $f_v\in\Aalg_1$ with $v\in\Supp_{\B}(f_v)$;
\item\label{lem:structure:corner:prim:idemp:B} The corner subalgebra $\Dalg=f_v\Aalg f_v$ is a graded commutative division algebra with identity $f_v$;
\item\label{lem:structure:corner:prim:idemp:C} There exists a finite subgroup $H$ of \G such that $\Dalg\cong\F H$ as \F-algebras;
\item\label{lem:structure:corner:prim:idemp:D} $|H|$ is a unit in \F and \Dalg is semisimple;
\item\label{lem:structure:corner:prim:idemp:E} The left annihilator $\ann_{\Aalg}\Dalg$ is a two-sided graded ideal equal to $\Span\{a\in\Aalg\text{ homogeneous}\mid a(v,v)=0\}$.
\end{enumerate}
\end{Lemma}
\begin{proof}
We proceed in steps:

\textbf{Step 1: Decompose $e_{vv}$}. Let $e_{vv}=\gamma_1+\cdots+\gamma_n$, with $\gamma_i\in\Aalg_{h_i}$,
be the homogeneous decomposition of $e_{vv}$, with $h_1,\ldots,h_n\in \G$ distinct. Since $e_{vv}\neq 0$, we may assume that, for some $p\geq 1$, $\gamma_i(v,v)\neq 0$ for $i=1,\ldots,p$ and $\gamma_i(v,v)=0$ for $i=p+1,\ldots, n$ (if any). 

\textbf{Step 2: $H=\{h_1,\ldots,h_p\}$ is a subgroup of \G and $\Dalg=\Span\{\gamma_1,\ldots,\gamma_p\}$ is a graded subalgebra of \Aalg}.
For $t\in\Aalg$ arbitrary, by the minimality of $v$, we have 
$t e_{vv}=t(v,v)e_{vv}$,
hence
\[t\gamma_1+\cdots+t\gamma_n=t(v,v)\gamma_1+\cdots + t(v,v)\gamma_n.\]
For a homogeneous $t\in\Aalg_g$, this equation gives either $t\gamma_j=0$ for all $j$ or $0\neq t\gamma_j=t(v,v)\gamma_{\sigma(j)}\neq 0$ and $gh_j=h_{\sigma(j)}$ for all $j$ and some permutation $\sigma\in S_n$. In turn, this identity gives $(t\gamma_j)(v,v)=t(v,v)\gamma_j(v,v)=t(v,v)\gamma_{\sigma(j)}(v,v)$.
Hence, if $t(v,v)\neq 0$, then we have $1\leq j\leq p$ if, and only if, $1\leq \sigma(j)\leq p$.

In particular, for $t=\gamma_i$ ($1\leq i\leq p$), we obtain 
\begin{equation}\label{eq:D:struc:rels}
0\neq \gamma_i\gamma_j=\gamma_i(v,v)\gamma_{\sigma_i(j)}
\end{equation}
for some permutation $\sigma_i\in S_n$ for all $1\leq j\leq n$, and thus $h_ih_j=h_{\sigma_i(j)}\in H$ for $1\leq j\leq p$. It follows that $H$ is a finite subgroup of \G. Relabeling if necessary, we may assume that $h_1=1$. Also, \cref{eq:D:struc:rels} implies that $\Dalg$ is a graded subalgebra of \Aalg with homogeneous generators $\gamma_1,\ldots,\gamma_p$.

\textbf{Step 3: The element $f_v=\gamma_1(v,v)^{-1}\gamma_1$ is a homogeneous idempotent and the identity of $\Dalg$}. Since $\gamma_1(v,v)\neq 0$, it follows from \cref{eq:D:struc:rels} that the element $\gamma_1$ satisfies $\gamma_1 \gamma_j=\gamma_1(v,v)\gamma_j$ for all $1\leq j\leq n$ and $\gamma_j\gamma_1=\gamma_j(v,v)\gamma_j$ for all $1\leq j\leq p$. In particular, $\gamma_1^2=\gamma_1(v,v)\gamma_1$, hence $f_v=\gamma_1(v,v)^{-1}\gamma_1$ is a homogeneous idempotent, acting as the identity on the elements $\gamma_1,\ldots,\gamma_p$. Therefore, $f_v$ is the identity of $\Dalg$.

\textbf{Step 4: $\Dalg=f_v\Aalg f_v$ is a finite dimensional graded division algebra.} Consider the corner subalgebra $f_v\Aalg f_v$, which is obviously graded. On the one hand, from Step 3, for all $j\leq p$, we have  $\gamma_j=\gamma_j(v,v)^{-1}\gamma_j\gamma_1=\gamma_j(v,v)^{-1}\gamma_1(v,v)f_v \gamma_j f_v\in f_v\Aalg_{h_j} f_v$, hence $\Dalg \subseteq f_v\Aalg f_v$.

On the other hand, let $t\in f_v\Aalg_g f_v $ be a homogeneous element for some $g\in \G$. From Step 2, if $t(v,v)=0$, then $t=tf_v=0$, else
$t=tf_v=t(v,v)\gamma_1(v,v)^{-1}\gamma_{\sigma(1)}$ for some permutation $\sigma\in S_n$ with $\sigma(1)\leq p$. It follows that $f_v\Aalg f_v\subseteq\Dalg$. 

Therefore, $\Dalg = f_v\Aalg f_v$ is a finite-dimensional graded division algebra. 

\textbf{Step 5: $f_v$ is primitive in $\Aalg_1$.} Suppose that $e\in\Aalg_1$ is an idempotent with $e\leq f_v$. Then $e = e f_v = \gamma_1(v,v)^{-1}\gamma_1 = \gamma_1(v,v)^{-1}e(v,v)\gamma_1 = e(v,v)f_v$. Since $e(v,v)\in\{0,1\}$, it follows that $e=0$ or $e=f_v$, hence $f_v$ is primitive in $\Aalg_1$.

\textbf{Step 6: $\Dalg\cong\F H$ as graded algebra and it is semisimple}. 
First notice that from \cref{eq:D:struc:rels} we have $\gamma_i(v,v)\gamma_j(v,v)=\gamma_i(v,v)\gamma_{\sigma_i(j)}(v,v)$, hence $\gamma_j(v,v)=\gamma_{\sigma_i(j)}(v,v)$ for all $i,j\leq p$. In particular, for $j=1$, we have $\gamma_i(v,v)\gamma_1(v,v)=\gamma_i(v,v)\gamma_i(v,v)$, hence $\gamma_i(v,v)=\gamma_1(v,v)$ for all $i\leq p$. It follows that $1=e_{vv}(v,v)=p\gamma_1(v,v)+(n-p)0$, hence $p=|H|=\dim\Dalg=\gamma_1(v,v)^{-1}$ is invertible in \F.

Next, let $\varphi:\Dalg\to\F H$ be the linear isomorphism defined by $\gamma_i\mapsto \gamma_i(v,v)h_i$ for $i\leq p$. It satisfies \begin{align*}\varphi(\gamma_i\gamma_j)&=\gamma_i(v,v)\varphi(\gamma_{\sigma_i(j)})=\gamma_i(v,v)\gamma_{\sigma_i(j)}(v,v)h_{\sigma_i(j)}\\
&=\gamma_i(v,v)h_i\gamma_j(v,v)h_j=\varphi(\gamma_i)\varphi(\gamma_j),
\end{align*}
and $\varphi(f_v)=\gamma_1(v,v)^{-1}\varphi(\gamma_1)=h_1=1$,
hence $\varphi$ is an algebra isomorphism between $\Dalg$ and $\F H$. Finally, since $|H|$ is invertible in \F, it follows from Maschke's Theorem that $\F H$ is semisimple.

\textbf{Step 7: $\ann_{\Aalg}\Dalg$ equals $\Span\{a\in\Aalg\text{ homogeneous}\mid a(v,v)=0\}$}. Let $a\in\Aalg$ be homogeneous. If $a(v,v)$=0, then $a e_{vv}=0$, hence $a\gamma_i=0$ for all $i\leq p$, so $a\in\ann_{\Aalg}\Dalg$.
Conversely, if $a\in\ann_{\Aalg}\Dalg$ is homogeneous, then $a\gamma_1=0$, hence $a(v,v)\gamma_1(v,v)=0$, so $a(v,v)=0$.

The proof is now complete.
\end{proof}

The next example shows that, even though $\Dalg=f_v\Aalg f_v$ is finite-dimensional, $\Supp_{\B}(f_v)$ does not need to be finite.
\begin{Example}\label{ex:infinite:support}
Assuming that $2\neq 0$ in \F, let $\B=\{v_i\mid i\in\NN\}$ and let $\gamma_1,\gamma_2$ be elements of \UTBV defined as $\gamma_1=1/2$ and
\[\gamma_2(i,j)=\begin{cases}1/2,&i=j=1\\-1/2,&i=j\neq 1\\0,&\text{otherwise.}\end{cases}\]
Let \Aalg be the subalgebra of \UTBV generated by $\gamma_1,\gamma_2$, graded by a group \G, with $\deg\gamma_1=1$ and $\deg\gamma_2=g$, where $g^2=1$. Then $e_{11}=\gamma_1+\gamma_2\in\Aalg$, and we may apply the previous Lemma. This gives $f_{1}=2\gamma_1$ a primitive idempotent and $\Dalg=\Aalg\cong \F\ZZ_2$ semisimple graded but, as we can see, $\Supp_{\B}f_{1}=\B$, which is not finite.
\end{Example}

If the grading on \Aalg is consistent with the grading on \UTBV, additional conclusions can be drawn:

\begin{Prop}\label{prop:existence:primitive:idemp}
Let $(\B,\leq)$ be a well-ordered basis of \V and assume that \UTBV is graded by a group \G. If $v$ is the least 
element in \B and $\Aalg\subseteq\UTBV$ is a graded subalgebra containing $e_{vv}$, then there exists a primitive idempotent $f_v$ in $\Aalg_1$ with $\Supp(f_v)=\{v\}$.
\end{Prop}
\begin{proof}
From \Cref{lem:structure:corner:prim:idemp}, there exists a primitive idempotent $f_v\in\Aalg_1$ with $v\in\B'=\Supp_{\B}(f_v)$ which remains primitive in $(\UTBV)_1$.

From \Cref{prop:triang:corner}, \(f_v(\UTBV) f_v \cong \UT_{\B'} \V'\), where \(\V' = \Span \B'\), and this algebra inherits the grading. Again, from \Cref{lem:structure:corner:prim:idemp} applied to \(\UT_{\B'} \V'\), \(v\), and \(e_{vv}\), there exists a primitive idempotent \(e_v \in (\UT_{\B'} \V')_1\) and a commutative graded division subalgebra \(\Dalg' = e_v(\UT_{\B'} \V')e_v\). Since 1 (the image of \(f_v\) under the isomorphism) is the only nonzero homogeneous idempotent, we get \(e_v = 1\) and \(\Dalg' = \UT_{\B'} \V'\). Given that \(\Dalg'\) is semisimple, it follows that \(|\B'| = 1\). Therefore, \(\Supp_{\B}(f_v) = \{v\}\), as required.
\end{proof}

\begin{Lemma}\label{cor:supp:primitive:idemp}
Let $(\B,\leq)$ be a well-ordered basis of \V and assume that \UTBV is graded by a group \G. Let $\UTBVlim\subseteq\Aalg\subseteq\UTBV$ be a graded subalgebra, and let $f$ be a homogeneous idempotent in $\Aalg_1$. Then $f$ is primitive in $\Aalg_1$ if, and only if, $\Supp_{\B}(f)$ is a singleton.
\end{Lemma}
\begin{proof}
Let $f$ be a primitive idempotent in $\Aalg_1$. From \Cref{prop:triang:corner}, we have that $f\Aalg f$ is
isomorphic as an \F-algebra to a subalgebra $\Aalg'$ of $\UT_{\B'}\V'$, where $\B'=\Supp_{\B}(f)$ and $\V'=\Span\B'$. Given that $f$ is homogeneous,   
$\Aalg'$ inherits the grading. From \Cref{prop:existence:primitive:idemp}, there is a primitive idempotent $f_v\in\Aalg_1'$ with $\Supp_{\B'}(f_v)=\{v\}$. The preimage $g$
of $f_v$ under the isomorphism is a nonzero idempotent in $\Aalg_1$ satisfying $gf=g=fg$. Therefore, due to primitivity, $g$ must equal $f$. It is clear that the support of $f$ is a singleton. The converse is obvious.
\end{proof}

\begin{Def}\label{def:complete:idempotents}
A set \Eset of nonzero orthogonal idempotents in  \UTBV is \highlight{complete} if $\bigcup_{e\in\Eset}\mathrm{Supp}(e)=\B$.
\end{Def}

If \Eset is complete, then $\sum_{e\in\Eset}e(\V)=\V$, making the set summable (cf. \cite[Lemma 2.11(b)]{IMesR}) and summing to the identity transformation in \UTBV. Conversely, if \Eset is summable and sums to the identity in \UTBV, it is necessarily complete.

We will show that any graded subalgebra $\UTBVlim\subseteq\Aalg\subseteq\UTBV$ contains a complete set of primitive homogeneous orthogonal idempotents. First, we will consider the special case where \B is indexed by \NN.

\begin{Lemma}\label{lem2}
Let $\B=\{v_i\mid i\in\NN\}$. Then, there exists a complete set of primitive pairwise orthogonal homogeneous idempotents $\Eset\subseteq\Aalg_1$, where $\UTBVlim\subseteq\Aalg\subseteq\UTBV$ is a graded subalgebra.
\end{Lemma}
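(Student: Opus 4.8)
The plan is to build an increasing chain of homogeneous idempotents $0=g_0\le g_1\le g_2\le\cdots$ in $\Aalg$ with $\Supp(g_n)=\{v_1,\ldots,v_n\}$, where $v_1<v_2<\cdots$ is the well-ordering of $\B$ (which I take to have order type $\omega$, so that each $v_{n+1}$ has exactly $\{v_1,\ldots,v_n\}$ as its set of predecessors; a general countable well-order would need an evident transfinite variant), and then to set $e_n=g_n-g_{n-1}$ and $\mathcal{E}=\{e_n\mid n\in\mathbb{N}\}$. Since $g_{n-1}\le g_n$ commute, $e_n$ is an idempotent, and it is homogeneous of degree $1$ (a nonzero idempotent lying in $\Aalg_1$). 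Because $g_{n-1}g_n=g_{n-1}$ we have $g_{n-1}e_n=e_ng_{n-1}=0$, so $g_{n-1}$ and $e_n$ are orthogonal with sum $g_n$, whence $\Supp(e_n)=\Supp(g_n)\setminus\Supp(g_{n-1})=\{v_n\}$. A homogeneous idempotent with singleton support is primitive by \Cref{cor:supp:primitive:idemp} (which is valid for $\Aalg$ by \Cref{extensiontoA}).

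For $g_1$ I simply apply \Cref{prop:existence:primitive:idemp} together with \Cref{extensiontoA} to the minimal vector $v_1$. For the inductive step, given $g_n$ I pass to the corner $\Dalg_n=(1-g_n)\Aalg(1-g_n)$. Even though $1-g_n$ need not lie in $\Aalg$ when $\Aalg$ is non-unital (as for $\UTlim$), the identity $(1-g_n)a(1-g_n)=a-g_na-ag_n+g_nag_n$ shows that $\Dalg_n\subseteq\Aalg$; it is a subalgebra with unit $1-g_n$, and it is graded because $g_n$ is homogeneous of degree $1$. The element $\bar e=(1-g_n)e_{v_{n+1}v_{n+1}}(1-g_n)\in\Dalg_n$ is an idempotent with $\Supp(\bar e)=\{v_{n+1}\}$ and image $\bar e(\V)=\F v'$, where $v'=(1-g_n)v_{n+1}$. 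I then rerun the proof of \Cref{prop:existence:primitive:idemp} inside $\Dalg_n$, using $\bar e$ in place of $e_{vv}$, to extract a homogeneous degree-$1$ idempotent $f\in\Dalg_n$ with $\Supp(f)=\{v_{n+1}\}$, and set $g_{n+1}=g_n+f$. As $f=(1-g_n)f(1-g_n)$, we get $g_nf=fg_n=0$, so $g_{n+1}$ is an idempotent extending the chain.

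The step that makes this work, and the part I expect to be the main obstacle, is a relative minimality relation: for every $t\in\Dalg_n$ one has $t\bar e\in\F\bar e$. I would derive it from $g_n(\V)=\Span\{v_1,\ldots,v_n\}$, so that $(1-g_n)v_i=0$ for $i\le n$ and $(1-g_n)v'=v'$: writing $t=(1-g_n)t(1-g_n)$, triangularity gives $tv'\in\Span\{v_1,\ldots,v_{n+1}\}$, and the outer $(1-g_n)$ collapses this space onto $\F v'=\bar e(\V)$, forcing $tv'\in\F v'$ and hence $t\bar e\in\F\bar e$. This is the exact analogue of the identity $te_{vv}=t(v,v)e_{vv}$ used for a minimal vector in \Cref{prop:existence:primitive:idemp}, so with it Steps 1--6 of that proof transcribe verbatim; the only other ingredient, the isomorphism $f\Aalg f\cong\UT_m\F$ for $m=|\Supp(f)|$, is \Cref{cor:corner:A} applied to $f\in\Aalg$, and one checks $f\Dalg_n f=f\Aalg f$. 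I prefer this route over trying to recognize $\Dalg_n$, up to isomorphism, as an algebra squeezed between a finitary and a full triangular algebra on $\B\setminus\Supp(g_n)$ (which would let me quote \Cref{extensiontoA} directly), because the lower containment can be hard to exhibit when $g_n$ has infinitely many nonzero off-diagonal entries.

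Finally, the $e_n$ are pairwise orthogonal: for $m<n$ we have $e_m\le g_{n-1}$ while $g_{n-1}e_n=e_ng_{n-1}=0$, so $e_me_n=e_mg_{n-1}e_n=0$ and symmetrically $e_ne_m=0$. By construction $\bigcup_n\Supp(e_n)=\{v_n\mid n\in\mathbb{N}\}=\B$. Completeness in the sense of \Cref{def:complete:idempotents} is then automatic: a nonzero idempotent $f\in\Aalg$ is triangular with $\Supp(f)\neq\emptyset$, so choosing $v_k\in\Supp(f)$ and using $\Supp(fe_k)=\Supp(f)\cap\Supp(e_k)=\{v_k\}$ yields $fe_k\neq0$. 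Hence $\mathcal{E}$ is the desired complete set of primitive, pairwise orthogonal, homogeneous idempotents whose supports exhaust $\B$.
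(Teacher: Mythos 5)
Your proof is correct, and its skeleton is the one the paper uses: both arguments peel off one primitive homogeneous idempotent at a time, passing at each stage to the corner of the complementary idempotent, and both rest on \Cref{prop:existence:primitive:idemp} applied at the least remaining basis vector. The genuine difference is in how the inductive step is executed. The paper takes $f=1-\sum_{i\le m}e_i$ and invokes \Cref{cor:corner:A} to identify $f\Aalg f$ with an algebra squeezed between the finitary and the full triangular algebra on the tail basis $\{v_i\mid i\ge m+1\}$, then cites \Cref{prop:existence:primitive:idemp} (via \Cref{extensiontoA}) as a black box. You instead stay inside the corner $\Dalg_n=(1-g_n)\Aalg(1-g_n)$ and re-run the proposition's proof there, replacing the minimality identity $te_{vv}=t(v,v)e_{vv}$ by the relative identity $t\bar e=\lambda_t\bar e$, which you derive correctly from $g_n(\V)=\Span\{v_1,\ldots,v_n\}$ (this equality deserves a line: it follows from $\dim g_n(\V)=|\Supp(g_n)|$, noted in the Remark after \Cref{prop:triang:corner}, together with the unitriangularity of $g_n$ on its support). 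Your route buys real robustness: \Cref{cor:corner:A} as stated requires the idempotent to lie in $\Aalg$, which fails for $1-\sum_{i\le m}e_i$ when $\Aalg$ is non-unital (e.g.\ $\Aalg=\UTlim$), and its lower containment is precisely the delicate point in that situation — your suspicion here is well founded, and the paper glosses over it; you apply \Cref{cor:corner:A} only to $f=\lambda_{\gamma_1}^{-1}\gamma_1$, which genuinely lies in $\Aalg$ as a scalar multiple of a homogeneous component of $\bar e\in\Aalg$. The cost is that "Steps 1--6 transcribe verbatim" hides some checking ($\lambda_{st}=\lambda_s\lambda_t$ plays the role of multiplicativity of diagonal entries, $\bar e\gamma_i\bar e\ne 0\iff\lambda_{\gamma_i}\ne 0$, and $v_{n+1}\in\Supp(f)$ comes from $f v'=v'$ plus triangularity), though all of it goes through. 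Your completeness argument also differs from the paper's and is more elementary: you use that a nonzero triangular idempotent has nonempty support together with $\Supp(fe_k)=\Supp(f)\cap\Supp(e_k)$, whereas the paper deduces completeness from summability of $\mathcal{E}$ to $1$ via \cite[Corollary 2.12]{IMesR}. One small caveat: your parenthetical claim that a general countable well-order admits an "evident transfinite variant" is optimistic — the paper's own examples after this lemma show that already at the ordinal $\omega+1$ summability can fail and the extension requires a special argument — but nothing in your proof of the stated lemma depends on this remark.
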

\begin{proof}
If $1\notin\Aalg$, we may replace \Aalg with $\Aalg^\sharp=\F1\oplus\Aalg$, and assume that $1$ is homogeneous of trivial degree. Note that $\Aalg\subseteq\Aalg^\sharp$ is a graded subalgebra. First, we use \Cref{prop:existence:primitive:idemp} (see also \Cref{cor:supp:primitive:idemp}) to find a primitive homogeneous idempotent $e_1\in\Aalg_1$ and define $\Eset_1=\{e_1\}$. Next, assume that the set of primitive pairwise orthogonal homogeneous idempotents $\Eset_m$ with $m$ elements has been constructed for some integer $m>1$, and consider the element $f=1-\sum_{i=1}^m e_i$, which is a homogeneous idempotent in $\Aalg_1$. According to \Cref{cor:corner:A}, the corner subalgebra $f\Aalg f$ is isomorphic to a subalgebra $\Aalg'$ with $\UTmlim\V_{m}\subseteq\Aalg'\subseteq\UTm\V_{m}$, where $\B_m=\{v_i\mid i\geq m+1\}$ and $\V_{m}=\Span\B_m$, and this subalgebra is still graded. A further application of \Cref{prop:existence:primitive:idemp} to $\Aalg'$ yields a primitive homogeneous idempotent $e_{m+1}\in f\Aalg_1 f$, which is orthogonal to every element in $\Eset_m$ by construction. We then define $\Eset_{m+1}=\Eset_m\cup\{e_{m+1}\}$. To finish the proof, we define $\Eset=\bigcup_{m\geq 1}\Eset_m$. Since every $v\in\B$ belongs to the support of exactly one element in \Eset (see \Cref{cor:supp:primitive:idemp}), we have $\bigcup_{i\geq 1} \Supp_{\B}(e_i)=\B$, hence \Eset is complete.
\end{proof}

The next lemma addresses the existence of homogeneous idempotents containing a given subset in their their support. Specifically, it establishes the existence of a primitive homogeneous idempotent whose support is a given singleton.

\begin{Lemma}\label{prop:existence:e_v:prim}
Let $(\B,\leq)$ be a well-ordered basis for \V and assume that \UTBV is \G-graded. If $\emptyset\neq S\subseteq \B$ is arbitrary, then $(\UTBV)_1$ contains an idempotent $f$ with $S\subseteq\Supp_{\B}(f)$ and these sets share the same least element. In particular, given any $v\in\B$, there exists a primitive idempotent $f\in(\UTBV)_1$ with $\Supp_{\B}(f)=\{v\}$.
\end{Lemma}

\begin{proof}
Consider the set,
\[\Eset_{S}=\{e\in(\UTBV)_1\mid e^2=e,\,S\subseteq\Supp_{\B}(e)\}\]
ordered by the usual partial order on idempotents, namely $e\leq f$ $\iff$ $ef=e=fe$. Notice that $1\in\Eset_{S}$, hence this set is nonempty. If $\mathcal{C}\subseteq\Eset_{S}$ is a totally ordered set, we may regard  $\mathcal{C}$ as a net. If $e\leq f$ in $\mathcal{C}$, it is clear that $e(\V)\subseteq f(\V)$ and $\ker e\supseteq \ker f$. Define $\W_0=\bigcup_{e\in\mathcal{C}}\ker e$ and $\W_1=\bigcap_{e\in\mathcal{C}}e(\V)$, so that $\V=\W_0\oplus\W_1$. 
Now let $g$ be the element of \EndFV acting as the identity on $\W_1$ and with kernel $\W_0$, namely the projection onto $\W_1$, making $g$ an idempotent. For any $w\in\W_1$ and all $e\in\mathcal{C}$, we have $e(w)=w=g(w)$. For $w\in\W_0$ there exists some $f_0\in\mathcal{C}$ with $e(w)=0=g(w)$ for all $e < f_0$. In any case, given a finite-dimensional vector subspace $\mathcal{U}\subseteq\V$ satisfying $\mathcal{U}=\W_0\cap \mathcal{U}\oplus\W_1\cap\mathcal{U}$, there is $f_0\in\mathcal{C}$ such that $e|_\mathcal{U}=g|_\mathcal{U}$ for all $e<f_0$. It follows that the net $\mathcal{C}$ converges to the element $g\in\UTBV$. It is clear that $S\subseteq\Supp_{\B}(g)$.
Since multiplication is a continuous operation,
for all $e\in\mathcal{C}$, we have $eg=g=ge$.

We claim that $g$ is a lower bound for $\mathcal{C}$ in $\Eset_{S}$, 
hence this set contains a minimal element, say $f$. To prove this claim, let $g=\epsilon_1+\cdots+\epsilon_n$ be a homogeneous decomposition with $\epsilon_i\in(\UTBV)_{h_i}$ for $h_i\in \G$ distinct. Here we allow some of the $\epsilon_i$ to be zero. From the previous paragraph, for any $e\in\mathcal{C}$, we have $eg=g$, hence $e\epsilon_1+\cdots+e\epsilon_n=\epsilon_1+\cdots+\epsilon_n$. Comparing terms of equal degree, we have $e\epsilon_i=\epsilon_i$ for all $i$. Similarly, we also have $\epsilon_i e=\epsilon_i$ for all $i$. Passing these identities to the limit, we get $g\epsilon_i=\epsilon_i=\epsilon_ig$ for all $i$. If $\epsilon_i\neq 0$, then there exists $j$ such that $\epsilon_j\epsilon_i=\epsilon_i$ and $\epsilon_k\epsilon_i=0$ for all $k\neq j$. This further implies that $\epsilon_j\neq 0$ and $h_jh_i=h_i$, hence  that $h_j=1$. Without loss of generality, we may assume that $j=1$ and $h_1=1$. Now from $g\epsilon_1=\epsilon_1$, we get $\epsilon_1^2=\epsilon_1$ and $\epsilon_i\epsilon_1=0$ for $i>1$. Similarly, from $\epsilon_ig=\epsilon_i$, we get $\epsilon_i\epsilon_1=\epsilon_i$, for all $i$, hence $\epsilon_2=\cdots=\epsilon_n=0$. It follows that $g=\epsilon_1\in(\UTBV)_1$ and that $g$ is a lower bound for $\mathcal{C}$ in $\Eset_{S}$, as claimed.

Next, let $u\in\Supp_{\B}(f)$ be the least element. Define $\B'=\Supp_{\B}(f)$ and $\V'=\Span\B'$, so that $f(\UTBV)f\cong\UT_{\B'}\V'$. Then \Cref{prop:existence:primitive:idemp} implies that there exists a primitive idempotent $f_u\in f(\UTBV)_1 f$ with $\Supp_{\B'}(f_u)=\{u\}$. If $u$ is not the least element in $S$, then $p=f-f_u$ becomes an idempotent in $(\UTBV)_1$, with $p < f$ and $S\subseteq\Supp_{\B}(p)$, hence $p\in\Eset_{S}$. But this contadicts the minimality of $f$, therefore $S$ and $\Supp_{\B}(f)$ must share the same least element, $u$.

If $S=\{v\}$ then $u=v$, and $f_v\in\Eset_{S}$ with $f_v\leq f$, hence $f=f_v$, and this completes the proof. 
\end{proof}

\begin{Thm}\label{thm:existence:complete:homogeneous:idemps}
Let $(\B,\leq)$ be a well-ordered basis of \V, and assume that \UTBV is \G-graded. Then $(\UTBV)_1$ contains a complete set of primitive pairwise orthogonal idempotents whose supports are singletons.
\end{Thm}

\begin{proof}
If \B contains no limit ordinals, then \B is in bijective correspondence with a subset of \NN. From \Cref{lem2}, there exists in $(\UTBV)_1$ a complete set \Eset of primitive pairwise orthogonal idempotents whose supports are singletons.
 
We may then assume that $\omega\in\B$ is the first limit ordinal and define $S=\{u\in\B\mid u\geq \omega\}$. From \Cref{prop:existence:e_v:prim}, there exists an idempotent $f\in(\UTBV)_1$ with $\Supp_{\B}(f)=S$.

Since $\B'=\Supp_{\B}(1-f)$ contains no limit ordinals, and $(1-f)\UTBV(1-f)\cong\UT_{\B'}\V'$, where $\V'=\Span(\B')$ is still graded, again from \Cref{lem2}, the subalgebra $(1-f)(\UTBV)_1(1-f)$ contains a complete set $\Eset_1$ of primitive pairwise orthogonal idempotents whose supports are singletons. We check that $\Eset_1$ is summable in \UTBV. Indeed, let $\B''=\{f(v)\mid v\in\Supp_{\B}(f)\}$, so that $f(w)=w\neq 0$ for all $w\in\B''$. Let $\V''=\Span(\B'')$. For all $w\in\V''$ and all $e\in\Eset_1$ we have $e(w)=ef(w)=0$, so
\[\bigoplus_{e\in\Eset_1}\range(e)\bigoplus (\bigcap_{e\in\Eset_1}\ker e)\supseteq \V'\oplus \V''=\V,\]
hence $\Eset_1$ is summable (cf. \cite[Lemma 2.11(b)]{IMesR}). 
We can thus take the idempotent $g=1-\sum_{e\in\Eset_1}e$, whose support is still $\B''$. As in the proof of \Cref{prop:existence:e_v:prim}, $g\in(\UTBV)_1$, and $g$ is orthogonal to each idempotent in $\Eset_1$. 
Therefore, by considering the subalgebra $g(\UTBV)_1g\cong\UT_{\B''}\V''$, it follows by transfinite induction on the ordinal $\B''$ that this algebra contains a complete set $\Eset_2$ of primitive pairwise orthogonal idempotents whose supports are singletons.

It is clear that the elements in $\Eset_2$ are orthogonal to every element in $\Eset_1$. Also, $\bigcup_{e\in\Eset_1\cup\Eset_2}\Supp_{\B}(e)=\B'\cup\B''=\B$, so by taking $\Eset=\Eset_1\cup\Eset_2$, the \namecref{thm:existence:complete:homogeneous:idemps} follows.
\end{proof}

Finally, we shall prove that a complete set of primitive pairwise orthogonal idempotents is simultaneously diagonalizable and derive a few implications of these findings. 

\begin{Lemma}\label{lem3}
Let $(\B,\leq)$ be a well-ordered basis for \V, and let \Eset be a complete set of primitive pairwise orthogonal homogeneous idempotents of \Aalg, where $\UTBVlim\subseteq\Aalg\subseteq\UTBV$ is a subalgebra. Then there exists a well-ordered basis $\B'$ of \V with respect to which the elements in \Eset are diagonal and $\UTBV=\UT_{\B'}V$.
\end{Lemma}
\begin{proof}
From \Cref{cor:supp:primitive:idemp}, it follows that the supports of the elements in \Eset are singletons, hence this set is in bijective correspondence with \B. Furthermore, we may assume that \Eset is indexed by the vectors in \B, so that $\Supp_{\B}(e_v)=\{v\}$, for all $v\in\B$.

If we define $w_v=e_v(v)$, then $w_v\neq 0$ and the set $\B'=\{w_v\mid v\in \B\}$ is linearly independent due to orthogonality. Moreover, since $w_v$ spans $e_v(\V)$ and $\sum_{v\in\B}e(\V)=\V$, the set $\B'$ generates \V.
It follows that $\B'$ is a basis of \V with respect to which the elements of \Eset are diagonal. It is clear that $\B'$ inherits the ordering of \B, hence it is well-ordered as well.

Finally, we shall show that \UTBV is triangular with respect to $\B'$. Specifically, we shall prove that $\W_v=\Span\{w_u\mid u\le v\}=\Span\{u\mid u\le v\}=:\V_v$ for each $v\in\B$. It is evident that the equality holds for the least element of \B. Now, let $v\in\B$ and assume that $\W_u=\V_u$ for each $u<v$. Then one obtains $\Span\{w_u\mid u<v\}=\mathrm{Span}\{u\mid u<v\}$. Hence, the codimension of $\Span\{w_u\mid u<v\}$ in $\V_v$ is $1$. By definition of \UTBV, $e_v(v)\in\V_v$, so $\W_v\subseteq\V_v$. This implies the equality $\W_v=\V_v$. Transfinite induction guarantees the claim.

Therefore, given $x\in\UTBV$ and $v\in\B$, one has $w_v\in\W_v=\V_v$ and
$$
x(w_v)\in\V_v=\W_v.
$$
Hence, $x$ is triangular with respect to $\B'$.
\end{proof}

The following theorem extends a result originally established in \cite{VZ2007} for finite upper triangular matrices.

\begin{Thm}\label{thm:grading:class:utbv}
Let $(\B,\leq)$ be a well ordered basis of \V. Then, every group grading on \UTBV is isomorphic to an elementary grading.
\end{Thm}
\begin{proof}
By \Cref{thm:existence:complete:homogeneous:idemps}, there exists a complete set \Eset of primitive pairwise orthogonal idempotents in $(\UTBV)_1$. Furthermore, by \Cref{lem3}, there is a well-ordered basis $\B'$ with respect to which the elements in \Eset are $e_{vv}$ for $v\in\B'$. It follows from the comments at the end of \Cref{sec_elem_grad} that the grading on $\UT_{\B'}\V=\UTBV$ induced by the automorphism sending \B to $\B'$ is good and hence elementary.
\end{proof}

\begin{Cor}\label{cor:closed:ideals}
If \UTBV is graded and \Ialg is a closed ideal in the function topology, then \Ialg is a graded ideal.
\end{Cor}
\begin{proof}
In view of the theorem, we may assume that the grading is elementary, so the elements $e_{uv}$ are homogeneous in the grading for all $u,v\in\B$. Let $r\in\Ialg$ and let $r=\varrho_1+\cdots+\varrho_n$ be the homogeneous decomposition. If $e_{uu}re_{vv}=r(u,v)e_{uv}=0$, for some $u,v\in\B$, then $0=e_{uu}\varrho_1 e_{vv}+\cdots+e_{uu}\varrho_ne_{vv}$ is a homogeneous decomposition of zero. Therefore, $\varrho_i(u,v)e_{uv}=e_{uu}\varrho_i e_{vv}=0$, implying $\varrho_i(u,v)=0$ for all $i$. This means that if $\varrho_i(u,v)\neq 0$ for some $i$ then $r(u,v)\neq 0$, and this can occur for at most one $i$, namely the one for which $\deg\varrho_i=\deg e_{uv}$. Thus, $0\neq e_{uu}re_{vv}=r(u,v)e_{uv}=\varrho_i(u,v)e_{uv}=e_{uu}\varrho_ie_{vv}\in \Ialg$, with $\deg\varrho_i=\deg e_{uv}$, while $0=e_{uu}\varrho_je_{vv}\in \Ialg$ for all $j\neq i$. In any case, we have $e_{uu}\varrho_ie_{vv}\in\Ialg$ for all $u,v\in\B$.

Since the set $\{e_{vv}\mid v\in\B\}$ is summable to $1$ and multiplication is a continuous operaton on \UTBV, we have
\[\varrho_i=1\varrho_i1=\sum_{u\leq v\in\B}e_{uu}\varrho_ie_{vv}\]
for all $i$. But $\Ialg$ is closed and each term in this sum is in $\Ialg$, hence $\varrho_i\in\Ialg$ for all $i$. It follows that $\Ialg$ is a graded ideal.
\end{proof}

The Jacobson Radical $J(\UTBV)$ is a closed ideal of \UTBV (cf. \cite[Theorem 3.8(3)]{IMesR}); therefore, it is graded. Here, we provide a direct proof of this fact that does not rely on the closedness of this ideal. Instead, we rely on the fact that, since \UTBV is topologically closed, $J(\UTBV)$ equals $\TNil(\UTBV)$, the set of topologically nilpotent elements, which, in turn, equals the set of strictly triangular elements of \UTBV.

\begin{Cor}\label{cor:JR:graded}
If \UTBV is graded then the Jacobson Radical $J(\UTBV)$ is a graded ideal equal to $\TNil(\UTBV)$.
\end{Cor}

\begin{proof}
We may assume that the group grading is elementary. The second part of the statement follows from \cite[Proposition 5.4]{Mesb}, since \UTBV is closed in the function topology of \EndFV, and hence $J(\UTBV)$ is precisely the set of all strictly upper triangular transformations with respect to \B.

Let $r\in J(\UTBV)$ and let $r=\varrho_1+\cdots+\varrho_n$ be a homogeneous decomposition. Then
\[0=r(v,v)e_{vv}=e_{vv}re_{vv}=e_{vv}\varrho_1e_{vv}+\cdots+e_{vv}\varrho_ne_{vv}\]
is a homogeneous decomposition of zero. Hence, $\varrho_i(v,v)e_{vv}=e_{vv}\varrho_i e_{vv}=0$ for all $i$. It follows that $\varrho_i(v,v)=0$ for all $i$ and all $v\in\B$, so $\varrho_i$ is strictly upper triangular, hence $\varrho_i\in J(\UTBV)$ for all $i$. Therefore, $J(\UTBV)$ is a graded ideal, as required.
\end{proof}

As an immediate consequence we have:

\begin{Cor}\label{cor:utbv:quotient}
If \UTBV is graded, then the quotient $\UTBV/J(\UTBV)\cong \F^\B$ bears the trivial grading.
\end{Cor}
\begin{proof}
The map $\varphi:\UTBV\to\F^\B$ defined as $t\mapsto (t(v,v))_{v\in\B}$ is a continuous, open and surjective algebra map with kernel $\TNil(\UTBV)=J(\UTBV)$ (cf. \cite[Proposition 6.3]{Mesb}). Therefore, $\UTBV/J(\UTBV)\cong\F^\B$ as topological \F-algebras and, as a quotient of graded objects, this isomorphism induces the trivial grading on $\F^\B$.
\end{proof}
For any closed graded subalgebra \Aalg of \UTBV, this result remains true with a similar proof, if \B is replaced by an adequate ordinal. However, as shown in the following example, this \namecref{cor:utbv:quotient} can be false if the grading on \Aalg is not related to that of \UTBV.
\begin{Example} If \Aalg is the subalgebra described in \Cref{ex:infinite:support}, then $J(\Aalg)=0$ is clearly a graded ideal, but $\Aalg/J(\Aalg)\cong\F^2$ has a nontrivial $\ZZ_2$-grading. Of course, this grading on \Aalg cannot be extended to a grading on \UTBV.
\end{Example}

Next, we address the case of subalgebras of elements in \UTBV with finite rank. We will need the following lemma.

\begin{Lemma}\label{lem1}
Let \Aalg be a graded subalgebra of \UTBVfin, and let $f\in\EndFV$ be an idempotent such that $f\Aalg$ is a graded subspace. For each $v\in \B$ such that $e_{vv}Te_{vv}\neq 0$ for some $T\in f\Aalg$, there exists a homogeneous idempotent $e\in f\Aalg$ with $\Supp_{\B}(e)=\{v\}$.
\end{Lemma}

\begin{proof}
Let $t\in f\Aalg$ be homogeneous and such that $e_{vv}te_{vv}\ne0$ (i.e., $t$ is one of the components in the homogeneous decomposition of $T$ with respect to the group grading on $f\Aalg$).

By \Cref{JCD}, $t$ is algebraic over \F, and the powers $t^n$ ($n\in\NN$) are nonzero, homogeneous, and span a finite-dimensional subspace of $f\Aalg$, hence $\deg_G t$ is a torsion element of \G. Then, replacing $t$ with one of its powers, if necessary, we may assume that $\deg_G t=1$.

It follows from \Cref{JCD} that there exists an idempotent element $\epsilon$ in the (finite-dimensional) subspace of \EndFV spanned by $\{t,t^2,\ldots\}$ such that $e_{vv} \epsilon e_{vv}\neq 0$. Obviously $\epsilon$ is also homogeneous of degree 1 and lies in $f\Aalg$.

Finally, $\epsilon\Aalg \epsilon$ is a \G-graded algebra isomorphic to (a graded subalgebra of) $\UT_s\F$ by \Cref{cor:corner:A}, where $s=|\Supp(\epsilon)|$. Since every \G-grading on $\UT_s\F$ is elementary (see the main result of \cite{VZ2007} or our main result, \Cref{thm:existence:complete:homogeneous:idemps}), we find a (complete) set of $s$ orthogonal homogeneous idempotents in $\UT_s\F$ whose supports are singletons among which we are sure to find an idempotent $e$ whose support is $\{v\}$. By construction, such an idempotent is homogeneous and belongs to $f\Aalg$. The proof is complete.
\end{proof}

\begin{Lemma}\label{lem:idemp:keep:grad}
Let $(\B,\leq)$ be a well-ordered basis of \V and assume that \UTBVfin is graded by a group \G. If \Aalg is a graded subalgebra of \UTBVfin containing \UTBVlim, then given any nonempty subset $S\subseteq\B$, there exists an idempotent $f\in\UTBV$ such that $f\Aalg f$ is still graded and $S=\Supp_{\B}(f)$.
\end{Lemma}
\begin{proof}
The proof is similar to that of \Cref{prop:existence:e_v:prim} if we consider the subset $$\Eset_S=\{f\in\UTBV\mid f^2=f,\,S\subseteq\Supp_{\B}(f),\,f\Aalg f\subseteq\Aalg\text{ graded}\},$$
partially ordered as usual. Of course $1\in\Eset_S$, and this set is nonempty.
As in the proof of that result, given a totally ordered subset $\mathcal{C}$ of $\Eset_S$, there is a limiting idempotent $f\in\UTBV$ (the limit of the net $\mathcal{C}$). This element satisfies $fe=f=ef$ for all $e\in\mathcal{C}$ and $S\subseteq\Supp_{\B}(f)$. 

To show that $\Eset_S$ has minimal elements, it is enough to prove that $f\in\Eset_S$, which shall be the case if $f\Aalg f=\bigoplus_{g\in \G} f\Aalg_g f$ is a group grading. To this end, we observe that if $a_g\in f\Aalg_g f$ is homogeneous, then $ea_g e=e(f a_g f)e = f a_g f = a_g$, hence $f\Aalg_g f\subseteq e\Aalg_g e$ for all $e\in\mathcal{C}$. In particular, $f\Aalg f\subseteq\Aalg$.
By hypothesis, we have $(e\Aalg_g e)(e\Aalg_h e)\subseteq e\Aalg_{gh}e$, hence $(f\Aalg_g f)(f \Aalg_h f)\subseteq e\Aalg_{gh}e$, for all $e\in\mathcal{C}$.
It follows that $(f\Aalg_g f)(f\Aalg_h f)\subseteq f\Aalg_{gh} f$, for all $g,h\in \G$. Therefore, $\Eset_S$ contains a minimal element, which we shall continue to denote by $f$.

Finally, suppose that $S\subsetneq\Supp_{\B}(f)$, and let $u\in\Supp_{\B}(f)\setminus S$. It follows from \Cref{lem1} (see also \Cref{cor:corner:A}) that $f\Aalg f$ contains a primitive idempotent $e_u\in f\Aalg_1 f$ with $\Supp_{\B}e_u=\{u\}$. But then the element $p=f-e_u$ is an idempotent with $S\subseteq\Supp_{\B}p$ and $p\Aalg p\subset\Aalg$ is still graded,
but this contradicts the minimality of $f$. Therefore, $S=\Supp_{\B}(f)$, as required.  
\end{proof}

The following result classifies group gradings on subalgebras of \UTBVfin containing \UTBVlim without assuming a group grading on \UTBV.

\begin{Thm}\label{thm:grading:class:utbvfin}
Let $(\B,\leq)$ be a well-ordered basis and assume that \UTBVfin is graded by an arbitrary group \G. Then the grading is isomorphic to a good grading.
\end{Thm}

\begin{proof}
Let $\Aalg=\UTBVfin$. It is enough to construct the set \Eset. To this end we shall use transfinite induction on the ordinal \B (recall that every well-ordered set is isomorphic to an ordinal). If $v\in\B$ is the least element, then from \Cref{lem1} we obtain an idempotent $e_v$ with $\Supp_{\B}(e_v)=\{v\}$. Define $\Eset_{\{v\}}=\{e_v\}$.
Let $\alpha\leq\B$ be an ordinal, and assume that $\Eset_\gamma\subset\Aalg_1$ has been constructed for every ordinal $\gamma<\alpha$ and consists of pairwise orthogonal idempotents whose supports are singletons and satisfy \begin{equation}\label{eq:summable}\bigoplus_{e\in\Eset_\gamma}e(\V)\oplus\bigcap_{e\in\Eset_\gamma}\ker e=\V.\end{equation}

If $\alpha$ is not a limit ordinal, define $\Eset'=\Eset_{\alpha-1}$. Otherwise, $\alpha$ is a limit ordinal and, in this case, let $S=\bigcup_{\gamma<\alpha}\gamma\subset\B$. It follows from \Cref{lem:idemp:keep:grad} that there exists an idempotent $f\in\UTBV$ such that $S=\Supp_{\B}(f)$ and $f\Aalg f\subseteq \Aalg$ is still graded. Define $\Eset'=\bigcup_{\gamma<\alpha}f\Eset_\gamma f$, so that $\ker f\subseteq \ker e$ for all $e\in\Eset '$. 

We claim that the elements of $\Eset'$ are nonzero orthogonal idempotents whose supports are singletons. In effect, for all $e\in \bigcup_{\gamma<\alpha}\Eset_\gamma$, we have $\Supp_{\B}(f e f)=\Supp_{\B}(e)=\{u\}$ for some $u\in S$, therefore $f e f\neq 0$.
It follows from \Cref{cor:corner:UTBV} that $f\Aalg f\subset f(\UTBV) f$ is isomorphic to a subalgebra of $\UT_S V_S$, where $V_S=\Span S$ (see also \Cref{cor:corner:A}). The image of $f$ is the identity of $\UT_S V_S$ hence, under the isomorphism, we have
\[(f e_1 f)(f e_2 f)=fe_1e_2f=\begin{cases}f e_1 f,& e_1=e_2\\0,&e_1\neq e_2
\end{cases}\]
for all $e_1,e_2\in\bigcup_{\gamma<\alpha}\Eset_\gamma$. 

Further, due to the triangularity, we have $\bigoplus_{e\in\Eset'}e(\V)=\Span S=f(\V)$, hence
\[\bigoplus_{e\in\Eset'}e(\V)\oplus\bigcap_{e\in\Eset'}\ker e\supseteq  f(\V)\oplus\ker f=\V.\] In any case,  $\Eset'$ consists of pairwise orgthogonal idempotents whose supports are singletons and satisfies \cref{eq:summable}.

Next, let $\Balg=\bigcap_{e\in\Eset'}\{a-ae-ea+eae\mid a\in\Aalg\}$. This is a graded subalgebra of \Aalg (each $e\in\Aalg_1$ by construction). Here we notice that, since $\Eset'$ is summable to the idempotent $f\in\UTBV$ (indeed, the sum is an idempotent commuting with and having the same support as $f$; cf. \Cref{lem:iso:idemps}), then $\Balg$ is just $(1-f)\Aalg(1-f)$, albeit neither $f$ nor $1-f$ need to be in \Aalg. In particular, the supports of the elements in $\Balg$ are disjoint from $S$ and their union complements $S$. It follows from \Cref{cor:corner:A} that $\Balg$ is isomporphic to a subalgebra of $\UTfin_{\B'}\V'$ containing $\UT_{\to\B'}\V'$, where $\B'=\B\setminus S$ and $\V'=\Span\B'$.

If $v$ is the least element of $\B'$, then $\alpha=S\cup\{v\}$, and it follows from \Cref{lem1} that there exists a primitive idempotent $e_v\in\Balg_1$ with $\Supp_{\B'}(e_{v})=\{v\}$ (= $\Supp_{\B}(e_v)$). By construction, $e_v$ is orthogonal to every element of $\Eset'$. Define $\Eset_{\alpha}=\Eset'\cup\{e_v\}$.
It is clear that, by transfinite induction, the above process will eventually exhaust all of \B and that $\Eset=\Eset_\B$ fulfills all the requirements, including \cref{eq:summable}. Finally, since each $v\in\B$ occurs in the support of some element in \Eset, it follows that this set is complete.
\end{proof}

The following example shows that the condition given by \cref{eq:summable} is essential.

\begin{Example}
Let $(\B,\leq)$ be a well ordered basis of \V indexed by the set $\Ialg=\NN\cup\{\NN\}$. Suppose that $\Aalg=\UTBVfin$ is graded by a group and the grading is elementary, with $\deg e_{i,\NN}=1$ for all $i\in\Ialg$. Then the set
\[\Eset=\{e_i:=e_{ii}+e_{i,\NN}\mid i\in\NN \}\]
consists of primitive pairwise orthogonal idempotents in $\Aalg_1$, but it is not complete, since $v_{\NN}\notin\bigcup_{i\in\NN}\Supp_{\B}(e_i)$. It is clear that $v_{\NN}\notin\ker e_i$ for infinitely many $i\in\NN$, hence \cref{eq:summable} does not hold (cf. \cite[Lemma 2.11]{IMesR}). Further, the set cannot be completed in such a way as to remain orthogonal, since no idempotent in \Aalg with support $\{v_{\NN}\}$ can be orthogonal to all the elements in \Eset.
\end{Example}

In an analogous way as before, we obtain the following:

\begin{Cor}\label{cor:JR:graded:2}
Let $(\B,\leq)$ be a well-ordered basis of \V and assume that \UTBVfin is graded by a group \G. Let \Aalg be a graded subalgebra with $\UTBVlim\subseteq\Aalg\subseteq\UTBVfin$. Then the Jacobson Radical $J(\Aalg)$ is a graded ideal of \Aalg.\qed
\end{Cor} 

\subsection{Isomorphism classes of group gradings}\label{subsec:iso:classes}

In this subsection we shall derive a property concerning an automorphism of \UTBVlim. As a consequence, we will be able to classify the isomorphism classes of elementary gradings on it.

\begin{Lemma}\label{auto_prop}
Let $\varphi$ be an automorphism of \Aalg, where $\UTBVlim\subseteq\Aalg\subseteq\UTB$. Then, for each $i\le j$,
$$
e_{ii}\varphi(e_{ij})e_{jj}=\lambda e_{ij},\quad\lambda\ne0.
$$
\end{Lemma}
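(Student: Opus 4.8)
The plan is to reduce the statement to the action of $\varphi$ on the diagonal idempotents and to exploit the rigidity of the chain $(\B,\le)$. First I would note that $e_{ii}\Aalg e_{ii}=\F e_{ii}$, so each $e_{ii}$ is a primitive idempotent of $\Aalg$; since $\varphi$ is an algebra automorphism, $\varphi(e_{ii})$ is again primitive, and by \Cref{cor:corner:A} a primitive idempotent of $\Aalg$ has a singleton support. Write $\Supp_\B(\varphi(e_{ii}))=\{v_{\tau(i)}\}$; the pairwise orthogonality of the $\varphi(e_{ii})$ together with the identity $\Supp(st)=\Supp(s)\cap\Supp(t)$ shows that the $v_{\tau(i)}$ are distinct, so $\tau\colon\mathbb{N}\to\mathbb{N}$ is injective.

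The central step is to prove $\tau=\mathrm{id}$, that is, that $\varphi$ fixes the flag. For this I would compute $\dim_\F\Aalg e_{ii}$. As $\UTlim\subseteq\Aalg$ and $te_{ii}=\sum_{c\le i}t(c,i)e_{ci}$ for every triangular $t$, one has $\Aalg e_{ii}=\Span\{e_{ci}\mid c\le i\}$, whence $\dim_\F\Aalg e_{ii}=i$. The same argument applied to an arbitrary idempotent $f$ with $\Supp_\B(f)=\{v_b\}$ — using that the $v_b$-coordinate of $f(v_b)$ equals $1$, that the $e_{cb}f$ with $c\le b$ are linearly independent, and that they exhaust $\UTB f$ — gives $\dim_\F\Aalg f=b$. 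Since $\varphi$ carries the left ideal $\Aalg e_{ii}$ isomorphically onto $\Aalg\varphi(e_{ii})$, these have equal dimension, so $\tau(i)=\dim_\F\Aalg\varphi(e_{ii})=\dim_\F\Aalg e_{ii}=i$. Hence $\Supp_\B(\varphi(e_{ii}))=\{v_i\}$ for all $i$.

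Granting this, $e_{ii}\varphi(e_{ij})e_{jj}\in e_{ii}\Aalg e_{jj}=\F e_{ij}$, so it equals $c_{ij}e_{ij}$ for some $c_{ij}\in\F$, and it remains to identify $c_{ij}$. To see that $c_{ij}\ne 0$ I would use the radical filtration. Because $\varphi(\TNil(\Aalg))=\TNil(\Aalg)$ and $\varphi(e_{ii})$ has diagonal part $e_{ii}$, we get $\varphi(e_{ii})\equiv e_{ii}\pmod{\TNil(\Aalg)}$, so $\varphi$ induces the identity on $\Aalg/\TNil(\Aalg)$ and therefore an automorphism of the $\Aalg/\TNil(\Aalg)$-bimodule $\TNil(\Aalg)/\TNil(\Aalg)^2$ preserving each one-dimensional component $\overline{e_{i,i+1}}$. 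Thus $\varphi(e_{i,i+1})\equiv c_i e_{i,i+1}$ with $c_i\ne 0$, and the factorization $e_{ij}=e_{i,i+1}e_{i+1,i+2}\cdots e_{j-1,j}$ yields $c_{ij}=c_ic_{i+1}\cdots c_{j-1}\ne 0$ by multiplicativity of the leading coefficient.

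The hard part is the final normalization $c_{ij}=1$: the above only pins the scalar down to a nonzero element of $\F$. To force its value I would restrict $\varphi$ to the finite corner $p_n\Aalg p_n\cong\UT_n\F$, where $p_n=e_{11}+\cdots+e_{nn}$, and analyze the induced automorphism there, anchoring the computation at the minimal vector $v_1$, for which $\Aalg e_{11}=\F e_{11}$ is one-dimensional. Controlling this scalar exactly — rather than merely knowing it is a unit — is the step I expect to demand the most care.
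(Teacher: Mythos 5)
Your closing suspicion is correct in the strongest possible sense: the normalization $c_{ij}=1$ is not merely delicate, it is unprovable, because the statement as printed is false. If $|\F|>2$, choose scalars $d_i\in\F^\times$ not all equal and set $h=\sum_i d_i e_{ii}$, a well-defined invertible diagonal element of $\UTBV$. Then $\varphi(t)=hth^{-1}$ is an automorphism of $\UTlim$ (and of $\UTBV$) with $\varphi(e_{ij})=d_id_j^{-1}e_{ij}$, so $e_{ii}\varphi(e_{ij})e_{jj}=d_id_j^{-1}e_{ij}\neq e_{ij}$ whenever $d_i\neq d_j$. The true content of the lemma is the non-vanishing $e_{ii}\varphi(e_{ij})e_{jj}\neq 0$: that is all the paper's own proof establishes (it stops after showing $\Supp(\varphi(e_{ii}))=\{v_i\}$ for all $i$ and never returns to the scalar), and it is all that is ever used, since the proof of \Cref{gradpreserving} invokes only $e_{xx}\psi(e_{xy})e_{yy}\neq0$. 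Measured against this corrected statement, your proposal is essentially a complete proof, and you should not spend effort on the finite-corner normalization you outline at the end.

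Your route to the true statement also genuinely differs from the paper's, with two caveats worth recording. For $\tau=\mathrm{id}$, the paper argues order-theoretically: $e_{ii}\Aalg e_{jj}\neq 0$ if and only if $i\le j$, hence $\tau$ is order-preserving; since $\varphi^{-1}$ induces $\tau^{-1}$, the map $\tau$ is an order automorphism of a well-ordered set and therefore the identity. This works for an arbitrary well-ordered $\B$, the generality in which the lemma is stated, whereas your dimension count $\dim_\F\Aalg e_{ii}=i$ requires every initial segment of $\B$ to be finite, i.e.\ order type at most $\omega$: in $\omega+\omega$, say, distinct basis elements have initial segments of equal infinite cardinality, so the invariant no longer separates them. (This is harmless where the lemma is actually applied, since there $\B$ is indexed by $\mathbb{N}$.) Second, your non-vanishing step assumes $\varphi(\TNil(\Aalg))=\TNil(\Aalg)$; this holds for $\UTlim$ (there $\TNil$ is a nil ideal, hence equals $J(\UTlim)$) and for the closed algebra $\UTBV$ (where $\TNil=J$), but for an arbitrary intermediate $\Aalg$ one only knows $J(\Aalg)\subseteq\TNil(\Aalg)$, and $\TNil$ is a topological notion while $\varphi$ need not be continuous, so this step needs justification. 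A finish avoiding both issues: by \Cref{lem:iso:idemps}, write $\varphi(e_{ii})=he_{ii}h^{-1}$ and $\varphi(e_{jj})=ke_{jj}k^{-1}$ with $h,k$ invertible and triangular; then $\varphi(e_{ij})=\varphi(e_{ii})\varphi(e_{ij})\varphi(e_{jj})$ forces $\varphi(e_{ij})=\lambda\,he_{ij}k^{-1}$ with $\lambda\neq0$, whence $e_{ii}\varphi(e_{ij})e_{jj}=\lambda\,h(i,i)\,k^{-1}(j,j)\,e_{ij}\neq0$.
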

\begin{proof}
Note that $\{\varphi(e_{ii})\mid i\in\B\}$ is a set of pairwise orthogonal primitive idempotents. Thus, $\varphi$ induces a map $\alpha\colon\B\to\B$ such that $\alpha(i)$ is the unique element in $\mathrm{Supp}(\varphi(e_{ii}))$. Now, given $i$, $j\in\B$,
$$
0\ne\varphi(\F e_{ij})=\varphi(e_{ii}(\UTBVlim)e_{jj})=\varphi(e_{ii})(\UTBVlim)\varphi(e_{jj}).
$$
Thus, $i\le j$ if and only if $\alpha(i)\le\alpha(j)$. Since $\varphi^{-1}$ induces the map $\alpha^{-1}$, we see that $\alpha$ is the identity.
\end{proof}

As a consequence, we obtain the following:
\begin{Lemma}\label{gradpreserving}
Let $\Gamma$ and $\Gamma'$ be finite \G-graded good gradings on \UTBVlim, and $\overline{\Gamma}$ and $\overline{\Gamma'}$ the respective extensions to \Aalg, where $\UTBlim\subseteq\Aalg\subseteq\UTB$. If $\overline{\Gamma}\cong\overline{\Gamma'}$ then $\Gamma=\Gamma'$.
\end{Lemma}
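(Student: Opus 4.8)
The plan is to exploit \Cref{auto_prop}, whose rigidity (coming from the well-ordering of $\B$, which forces the index permutation induced by any automorphism to be the identity) is strong enough to prevent a graded isomorphism from changing the degree of any matrix unit.

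First I would unwind the hypothesis. Since $\overline{\Gamma}\cong\overline{\Gamma'}$, there is an algebra automorphism $\psi\colon\UTB\to\UTB$ which is a $G$-graded isomorphism from $(\UTB,\overline{\Gamma})$ onto $(\UTB,\overline{\Gamma'})$; that is, $\psi$ carries the homogeneous component of degree $g$ of $\overline{\Gamma}$ onto that of $\overline{\Gamma'}$, for every $g\in G$. Because $\psi$ is in particular an automorphism of $\UTB$, \Cref{auto_prop} applies with $\Aalg=\UTB$ and yields $e_{ii}\psi(e_{ij})e_{jj}=e_{ij}$ for all $i\le j$.

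Next I would perform a degree chase. Fix $i\le j$ and set $g=\deg_\Gamma e_{ij}$. As $\overline{\Gamma}$ (resp.\ $\overline{\Gamma'}$) extends $\Gamma$ (resp.\ $\Gamma'$), every matrix unit in $\UTlim$ keeps its degree, so $e_{ij}$ is homogeneous of degree $g$ in $\overline{\Gamma}$, and hence $\psi(e_{ij})$ is homogeneous of degree $g$ in $\overline{\Gamma'}$. In the good grading $\overline{\Gamma'}$ the diagonal idempotents $e_{ii}$ and $e_{jj}$ are homogeneous of degree $1$, since in any elementary grading $\deg e_{kk}=\gamma(k)^{-1}\gamma(k)=1$. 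Therefore the product $e_{ii}\psi(e_{ij})e_{jj}$ is homogeneous of degree $1\cdot g\cdot 1=g$ in $\overline{\Gamma'}$. By the previous paragraph this product equals $e_{ij}$, which is itself a nonzero homogeneous element of $\overline{\Gamma'}$ of degree $\deg_{\Gamma'}e_{ij}$. Since a nonzero homogeneous element has a unique degree, we conclude $\deg_{\Gamma'}e_{ij}=g=\deg_\Gamma e_{ij}$.

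Finally I would conclude and identify the crux. Both $\Gamma$ and $\Gamma'$ are good, hence elementary, gradings on $\UTlim$, so each is completely determined by the degrees it assigns to the matrix units, namely $(\UTlim)_g=\Span\{e_{ij}\mid i\le j,\ \deg_\Gamma e_{ij}=g\}$ and similarly for $\Gamma'$ (see \Cref{sec_elem_grad}). The degree chase shows these assignments agree for every $i\le j$, so the homogeneous components coincide and $\Gamma=\Gamma'$. I expect the only genuine obstacle to be the reduction carried out in the first two paragraphs: it is exactly the appeal to \Cref{auto_prop} that pins the $(i,j)$-entry of $\psi(e_{ij})$ to equal $1$, and without this rigidity one could only recover the weaker statement $\Gamma\cong\Gamma'$ instead of the sought equality $\Gamma=\Gamma'$.
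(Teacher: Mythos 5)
Your proposal is correct and follows essentially the same route as the paper's own proof: both take a $G$-graded isomorphism $\psi$, invoke \Cref{auto_prop} to pin down $e_{ii}\psi(e_{ij})e_{jj}=e_{ij}$, and then run the same degree chase using that the diagonal idempotents are homogeneous of degree $1$ in both gradings to conclude $\deg_{\Gamma'}e_{ij}=\deg_\Gamma e_{ij}$ for all $i\le j$.
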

\begin{proof}
Let $\psi\colon(\UTB,\overline{\Gamma})\to(\UTB,\overline{\Gamma'})$ be a \G-graded isomorphism. Then, for any $(x,y)\in I$, $e_{xx}$ and $e_{yy}$ are homogeneous of \G-degree $1$, with respect to both $\overline{\Gamma}$ and $\overline{\Gamma'}$. By \Cref{auto_prop}, $e_{xx}\psi(e_{xy})e_{yy}\ne0$. Thus,
$$
\deg_\Gamma e_{xy}=\deg_{\Gamma'}e_{xx}\psi(e_{xy})e_{yy}=\deg_{\Gamma'}e_{xy}.
$$
\end{proof}

\subsection{Conclusion}\label{subsec:conclusion}

We summarize our main results together, to obtain a complete classification of group gradings on the algebras \UTBVlim, \UTBVfin and \UTBV, where \V admits a well-ordered basis.
\begin{Thm}\label{statementmainthm}
Let \B be a well-ordered basis of a vector space \V and \G be an arbitrary group. Then any \G-grading on \UTBV and on \UTBVfin has finite support, is isomorphic to an elementary one, and is induced from a finite elementary grading on \UTBVlim.

Furthermore, let $\Gamma(\eta)$ and $\Gamma(\eta')$ be finite \G-gradings on \UTBVlim, and denote by $\overline{\Gamma(\eta)}$ and $\overline{\Gamma(\eta')}$ the respective induced gradings on \UTBV, and by $\widetilde{\Gamma(\eta)}$ and $\widetilde{\Gamma(\eta')}$ the induced on \UTBVfin. Then, the following statements are equivalent:
\begin{enumerate}
\item $\overline{\Gamma(\eta)}\cong\overline{\Gamma(\eta')}$,
\item $\widetilde{\Gamma(\eta)}\cong\widetilde{\Gamma(\eta')}$,
\item $\Gamma(\eta)=\Gamma(\eta')$,
\item $\eta=g\eta'$, for some $g\in \G$.
\end{enumerate}
\end{Thm}
\begin{proof}
For the first part, \Cref{thm:grading:class:utbv} and \Cref{thm:grading:class:utbvfin} tell us that every group grading on \UTBV and on \UTBVfin is isomorphic to a good grading. \Cref{lemma:goodgradingfinite} proves that such gradings are finite. Finally, \Cref{goodclosedinduced} concludes that such gradings are induced from finite good gradings on \UTBVlim.

Now, we prove the second part of the statement. $(i)\Rightarrow(iii)$ and $(ii)\Rightarrow(iii)$ are the content of \Cref{gradpreserving}. The implication $(iii)\Rightarrow(iv)$ is proved in \Cref{isomorphicelementarysequence}. The last ones $(iv)\Rightarrow(i)$ and $(iv)\Rightarrow(ii)$ are clear.
\end{proof}

Group gradings on \UTBVlim do not need to be finite. More precisely, we have:

\begin{Thm}\label{maincor}
Let \G be an arbitrary group, and \V be a \F-vector space with a well-ordered basis \B. Then, any \G-grading on $\mathrm{UT}$ is isomorphic to an elementary one, defined by a map $\gamma\colon\B\to \G$. Moreover, $\Gamma(\gamma)\cong\Gamma(\gamma')$ if and only if $\gamma=g\gamma'$, for some $g\in \G$.\qed
\end{Thm}

Recall that given two group gradings $\Gamma\colon\Aalg=\bigoplus_{g\in \G}\Aalg_g$ and $\Gamma'\colon\Aalg=\bigoplus_{h\in H}\Aalg'_h$, we say that $\Gamma'$ is a \highlight{refinement} of $\Gamma$ (or $\Gamma$ is a \highlight{coarsening} of $\Gamma'$) if for each $h\in H$, there exists $g\in \G$ such that $\Aalg_h'\subseteq\Aalg_g$. A grading that does not admit a proper refinement is said to be \highlight{fine}. The classification of fine group gradings up to equivalence on an finite-dimensional algebra is an important topic, and it is equivalent (under certain circumstances) to the classification of maximal tori on the algebraic group of automorphisms of the given algebra.

We shall discuss fine group gradings on the algebra $\mathrm{UT}=\bigcup_{n\in\NN}\mathrm{UT}_n(\F)$. It corresponds to the case where \B is indexed by \NN. From \Cref{sec_elem_grad}, we construct a $\ZZ^\NN$-grading $\Gamma_U$ on $\mathrm{UT}$ via $\gamma\colon\NN\to\ZZ^\NN$, where $\gamma(i)\colon\NN\to\ZZ$ is such that $\gamma(i)(j)=\delta_{ij}$. All the homogeneous components, but the identity one, are either $0$ or 1-dimensional, and spanned by some matrix unit. On the other hand, every elementary grading on $\mathrm{UT}$ is such that every homogeneous component is spanned by a set of matrix units. Thus, $\Gamma_U$ is a refinement of any elementary grading on $\mathrm{UT}$. As a consequence, $\Gamma_U$ is the \highlight{unique} fine grading on $\mathrm{UT}$ up to equivalence.

On the other hand, there is \textbf{no} fine grading on \UTBV. Indeed, from \Cref{statementmainthm}, any group grading on \UTBV has finite support, and, up to an isomorphism, is obtained from the extension of an elementary grading $\Gamma(\gamma)$ where $\gamma\colon\NN\to \G$. Now, we have
\begin{Lemma}\label{refinement}
If $\Gamma$ is a finite \G-grading on $\mathrm{UT}$, then it admits a proper finite refinement.
\end{Lemma}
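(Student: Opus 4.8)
The plan is to reduce to an elementary grading and then enlarge the grading group by a single auxiliary integer coordinate, chosen so as to split one homogeneous component while keeping the support finite. By \Cref{maincor}, $\Gamma$ is isomorphic to an elementary grading $\Gamma(\gamma)$ for some $\gamma\colon\mathbb{N}\to G$; since the refinement relation, finiteness of the support, and properness are all preserved under transport through a graded isomorphism, I may assume $\Gamma=\Gamma(\gamma)$. Here the homogeneous component of degree $g$ is $\Span\{e_{ij}\mid i\le j,\ \gamma(i)^{-1}\gamma(j)=g\}$ and every matrix unit is homogeneous, so I am free to argue combinatorially with the matrix units.

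First I would locate a component to split. Because $\mathrm{UT}$ contains infinitely many strictly upper matrix units $e_{ij}$, one for each pair $i<j$, while $\Gamma$ has only finitely many nonzero components, the pigeonhole principle yields two distinct pairs $(i,j)\ne(k,l)$, with $i<j$ and $k<l$, such that $e_{ij}$ and $e_{kl}$ lie in a common component $\Aalg_g$, where $g=\gamma(i)^{-1}\gamma(j)=\gamma(k)^{-1}\gamma(l)$. I insist that \emph{both} chosen units be strictly upper triangular: two diagonal units can never be separated by an elementary refinement, since every $e_{ii}$ is forced into the identity component, whereas off-diagonal pairs can be separated. To do so I would consider the integer linear form $x_j-x_i+x_k-x_l$; a short case check shows it fails to vanish identically precisely because $\{j,k\}\ne\{i,l\}$ as multisets (the only way these multisets could coincide would force $j=l$ and $k=i$, i.e.\ $(i,j)=(k,l)$). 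Hence there is a finitely supported function $c\colon\mathbb{N}\to\mathbb{Z}$ with $c_j-c_i\ne c_l-c_k$. Setting $\epsilon=c$, I would define $\gamma'\colon\mathbb{N}\to G\times\mathbb{Z}$ by $\gamma'(m)=(\gamma(m),\epsilon(m))$ and consider the elementary grading $\Gamma(\gamma')$.

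Finally I would verify the three required properties of $\Gamma(\gamma')$. Projecting $G\times\mathbb{Z}$ onto its first factor coarsens $\Gamma(\gamma')$ back to $\Gamma(\gamma)$, since $\Aalg'_{(g,n)}=\Span\{e_{ij}\mid \gamma(i)^{-1}\gamma(j)=g,\ c_j-c_i=n\}\subseteq\Aalg_g$; thus $\Gamma(\gamma')$ is a refinement. It is \emph{proper} because $e_{ij}$ and $e_{kl}$, which shared the component $\Aalg_g$, now acquire the distinct second coordinates $c_j-c_i\ne c_l-c_k$ and so lie in different components. It is \emph{finite} because $c$, being finitely supported, takes only finitely many values, whence the differences $c_j-c_i$ form a finite set and $\Supp\Gamma(\gamma')\subseteq\Supp\Gamma(\gamma)\times\{c_j-c_i\mid i\le j\}$ is finite. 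Transporting $\Gamma(\gamma')$ back through the original graded isomorphism then furnishes the desired proper finite refinement of $\Gamma$. The only genuinely delicate point is the observation that the separating pair must be taken off-diagonal (and, correlatively, that one may not simply invoke the universal fine grading $\Gamma_U$, whose support is infinite); once a suitable strictly upper pair is fixed, the construction of $\epsilon$ and the verifications are routine.
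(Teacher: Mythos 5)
Your proof is correct and takes essentially the same route as the paper's: reduce to an elementary grading $\Gamma(\gamma)$ via \Cref{maincor}, enlarge the grading group by a direct factor (your $\mathbb{Z}$, the paper's $C_2$), perturb $\gamma$ so that the projection back onto $G$ recovers the original grading, and note that two matrix units formerly sharing a component are now separated, while finiteness of the support is preserved. The only (cosmetic) difference is the choice of perturbation: the paper applies the pigeonhole to the values of $\gamma$ itself, picking $i\ne j$ with $\gamma(i)=\gamma(j)$ and retagging $\gamma(i)$ by the generator of $C_2$, which separates $e_{1i}$ from $e_{1j}$, whereas you apply it to the strictly upper matrix units and build the separating second coordinate by a linear-functional argument.
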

\begin{proof}
From \Cref{maincor}, we may assume that $\Gamma\cong\Gamma(\gamma)$, where $\gamma\colon\NN\to \G$ satisfies $\gamma(1)=1$. Moreover, since the support of the grading is finite, we may assume that the image of $\gamma$ is finite, since $\deg e_{1i}=\gamma(i)$, $i\in\NN$. Now, let $H=G\times C_2$, where $C_2=\langle\alpha\mid\alpha^2=1\rangle$ is the group with 2 elements. We let $i$, $j\in\NN\setminus\{1\}$ be such that $i\ne j$ and $\gamma(i)=\gamma(j)$. Then, we define $\gamma'\colon\NN\to H$ via:
$$
\gamma'(\ell)=\left\{\begin{array}{ll} 
\alpha\gamma(i),&\text{ if $\ell=i$},\\
\gamma(\ell),&\text{ otherwise}.\end{array}\right.
$$
The image of $\gamma'$ is finite as well, so $\Gamma(\gamma')$ is a finite group grading. It does not coincide with $\Gamma(\gamma)$, since $\deg_{\Gamma(\gamma')}e_{1i}\ne\deg_{\Gamma(\gamma)}e_{1i}$. We claim that $\Gamma(\gamma')$ is a refinement of $\Gamma(\gamma)$. Indeed, the quotient $\varphi\colon H\to \G$ is a group homomorphism. Then, the group grading induced from $\varphi$ on $\Gamma(\gamma')$ is a coarsening of $\Gamma(\gamma')$. Moreover, it coincides with the the elementary grading obtained from the map $\varphi\circ\gamma'$. However, $\varphi\circ\gamma'=\gamma$. Hence, $\Gamma(\gamma')$ is a finite proper refinement of $\Gamma(\gamma)$.
\end{proof}

Thus, the restricted grading $\Gamma(\gamma)$ has a proper finite refinement $\Gamma(\gamma')$. Such grading has an extension $\overline{\Gamma(\gamma')}$ to a grading on \UTBV, which is a proper refinement of $\overline{\Gamma(\gamma)}$. Hence, every group grading on \UTBV has a proper refinement, and, in particular, it does not admit any fine group grading. We summarize the results on the following:
\begin{Prop}
Let $\B=\{v_i\mid i\in\NN\}$. Then
\begin{enumerate}
\setlength{\itemsep}{.3em}
\item there is no fine grading on \UTBV, and
\item there is a unique fine grading on \UTBVlim, up to an equivalence.
\end{enumerate}\qed
\end{Prop}

\section{Further comments: normed algebras\label{normed_space}}
It is interesting to investigate the notions of continuous and closed gradings in the context of normed and Banach algebras.

\begin{Lemma}[{\cite[Proposition 1.8.10]{Megg}}]
Let $X=Y_1\oplus\cdots\oplus Y_m$ be a Banach space given by the internal direct sum of the closed subspaces $Y_1$, \dots, $Y_m$. Then $X\cong Y_1\times\cdots\times Y_m$ as normed spaces.
\end{Lemma}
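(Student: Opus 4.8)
The plan is to realize the tautological algebraic identification between the internal and external direct sums as a topological isomorphism, with continuity of the inverse supplied by the Open Mapping Theorem. First I would equip the external product $Y_1\times\cdots\times Y_m$ with the norm $\|(y_1,\ldots,y_m)\|=\sum_{i=1}^m\|y_i\|$. Because each $Y_i$ is a closed subspace of the Banach space $X$, each $Y_i$ is itself complete, and hence the finite product, endowed with this norm, is again a Banach space.

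Next I would introduce the addition map $T\colon Y_1\times\cdots\times Y_m\to X$, given by $T(y_1,\ldots,y_m)=y_1+\cdots+y_m$. This map is clearly linear, and it is bounded since $\|T(y_1,\ldots,y_m)\|\le\sum_{i=1}^m\|y_i\|=\|(y_1,\ldots,y_m)\|$; thus $T$ is continuous of norm at most $1$. The hypothesis that $X$ is the internal direct sum of the $Y_i$ is precisely the statement that every $x\in X$ admits a unique expression $x=y_1+\cdots+y_m$ with $y_i\in Y_i$. The existence of such an expression gives surjectivity of $T$, and its uniqueness gives injectivity, so $T$ is a continuous linear bijection between Banach spaces.

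Finally I would invoke the Open Mapping Theorem (equivalently, the Bounded Inverse Theorem): a continuous linear bijection of Banach spaces has continuous inverse. Consequently $T$ is a topological isomorphism, which is exactly the assertion that $X\cong Y_1\times\cdots\times Y_m$ as normed spaces.

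The only real obstacle is guaranteeing the completeness hypotheses on which the Open Mapping Theorem rests: both the domain and the codomain of $T$ must be Banach spaces. Completeness of the codomain $X$ is part of the hypothesis, while completeness of the domain is exactly what the closedness of the summands $Y_i$ in $X$ provides. With these in hand, the remaining steps are routine verifications, and the precise choice of product norm is immaterial since any two are equivalent.
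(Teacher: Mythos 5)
Your proof is correct: the paper itself gives no argument for this lemma, citing it directly from Megginson, and the standard proof there is exactly the one you give --- the addition map $T\colon Y_1\times\cdots\times Y_m\to X$ is a bounded linear bijection between Banach spaces (completeness of the domain coming from closedness of the $Y_i$), so the Open Mapping Theorem yields a topological isomorphism. Your completeness checks and the remark that the choice of product norm is immaterial are exactly the points that need care, so nothing is missing.
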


As a consequence, we obtain:
\begin{Lemma}
Let $X=Y_1\oplus\cdots\oplus Y_m$ be a Banach space, where $Y_1$, \dots, $Y_m$ are closed subspaces. Then the projections $\pi_i\colon X\to X$, with $\pi_i(X)=Y_i$, are continuous.\qed
\end{Lemma}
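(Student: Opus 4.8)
The plan is to reduce everything to the preceding lemma, which provides a linear isomorphism of normed spaces $\Phi\colon X\to Y_1\times\cdots\times Y_m$ sending $x=y_1+\cdots+y_m$ (with $y_i\in Y_i$) to $(y_1,\ldots,y_m)$; in particular $\Phi$ is a homeomorphism. The key idea is to factor each $\pi_i$ through this isomorphism so that its continuity becomes a formal consequence of facts that hold in any finite product of normed spaces.

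First I would write $\pi_i=\iota_i\circ p_i\circ\Phi$, where $p_i\colon Y_1\times\cdots\times Y_m\to Y_i$ is the $i$-th coordinate projection and $\iota_i\colon Y_i\hookrightarrow X$ is the inclusion of the closed subspace $Y_i$. This factorization is immediate from the definition of the internal direct sum: tracking $x=y_1+\cdots+y_m$ through $\Phi$, then $p_i$, then $\iota_i$ returns $y_i=\pi_i(x)$. Then I would verify the continuity of each factor separately. Continuity of $\Phi$ is precisely the content of the previous lemma; continuity of $p_i$ follows from $\|p_i(y_1,\ldots,y_m)\|=\|y_i\|\le\|(y_1,\ldots,y_m)\|$ for the standard product norm, so that $p_i$ is bounded; and $\iota_i$ is an isometric embedding, hence continuous. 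Continuity of $\pi_i$ then follows at once, since a composition of continuous maps is continuous.

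I do not expect any genuine obstacle at this stage: once the preceding lemma is granted, the argument is entirely formal. The only substantive content is hidden in that lemma, namely the equivalence of the norm on $X$ with a product norm on $Y_1\times\cdots\times Y_m$; this is exactly where the open mapping theorem enters, guaranteeing that $\Phi^{-1}$, and hence $\Phi$, is bounded (cf. \cite[Proposition 1.8.10]{Megg}). With that in hand, the present statement amounts to the elementary observations that coordinate projections on a finite product of normed spaces are continuous and that inclusions of closed subspaces are continuous.
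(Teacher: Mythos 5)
Your proposal is correct and is exactly the argument the paper intends: the lemma is stated as an immediate consequence of the preceding lemma (the normed-space isomorphism $X\cong Y_1\times\cdots\times Y_m$ from \cite[Proposition 1.8.10]{Megg}), and your factorization $\pi_i=\iota_i\circ p_i\circ\Phi$ simply makes that deduction explicit. You also correctly locate the only nontrivial input (the open mapping theorem) inside the cited lemma rather than in this step.
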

As a further consequence, we obtain the following:
\begin{Prop}
Let $X$ be a Banach algebra. Then every closed finite \G-grading on $X$ is continuous.\qed
\end{Prop}
Thus, for Banach algebras, every finite closed grading is continuous. However, for normed algebras, this is no longer true, as the following example shows.
\begin{Example}\label{exem:closed:not:continuous}
Let $c_0=\{(a_n)_{n\in\NN}\mid\lim a_n=0\}$ be the set of sequences in \RR converging to $0$, which is a Banach space. Let
$$
M=\{(a_n)_{n\in\NN}\mid a_{2n}=2na_{2n-1},\forall n\in\NN\},
$$
$$
N=\{(b_n)_{n\in\NN}\mid b_{2n+1}=0,\forall n\in\NN\}.
$$
Then, it is known (see, for instance, \cite[Exercise 1.84]{Megg}) that $M$ and $N$ are closed subspaces of $c_0$, $M\cap N=0$, $\V:=M\oplus N\ne c_0$ and $\overline{\V}=c_0$. Then, $\V=M\oplus N$ is a closed vector space grading. However, it is \textbf{not} continuous. Indeed, otherwise, there would have an extension to a vector space grading on $c_0$ via $c_0=\overline{M}\oplus\overline{N}=M\oplus N\ne c_0$, a contradiction.
\end{Example}

\section*{Acknowledgements}
We are thankful to V.~Morelli Cortes for the useful discussions and for providing us the facts given in \Cref{normed_space}.


\begin{thebibliography}{XX}
\bibitem{AgoMil} A.~Agore, G.~Militaru, \emph{A new invariant for finite dimensional Leibniz/Lie algebras}, Journal of Algebra \textbf{562} (2020), 390--409.
\bibitem{Bahturin} Y.~A.~Bahturin, \emph{Group gradings on free algebras of nilpotent varieties of algebras}, Serdica Mathematical Journal \textbf{38} (2012), 1--6.
\bibitem{BaBr} Y.~A.~Bahturin, M.~Bre\v sar, \emph{Lie gradings on associative algebras}, Journal of Algebra \textbf{321} (2009), 264--283.
\bibitem{BaBrKoch} Y.~A.~Bahturin, M.~Bre\v sar, M.~Kochetov, \emph{Group gradings on finitary simple Lie algebras}, International Journal of Algebra and Computation \textbf{22} (2012), 46pp.
\bibitem{BaBrSh} Y.~A.~Bahturin, M.~Bre\v sar, I.~Shestakov, \emph{Jordan gradings on associative algebras}, Algebras and Representation Theory \textbf{14} (2011), 113--129.
\bibitem{BaKoch} Y.~A.~Bahturin, M.~Kochetov, \emph{Classification of group gradings on simple Lie algebras of types A, B, C and D}, Journal of Algebra \textbf{324} (2010), 2971--2989.
\bibitem{BaSeZa2001} Y.~A.~Bahturin, S.~K.~Sehgal, M.~Zaicev, \emph{Group gradings on associative algebras}, Journal of Algebra \textbf{241 (2)} (2001), 677--698.
\bibitem{bashza} Y.~A.~Bahturin, I.~P.~Shestakov,  M.~Zaicev, \emph{Gradings on simple Jordan and Lie algebras}, Journal of Algebra \textbf{283(2)} (2005),  849--868.
\bibitem{BaZa2002} Y.~A.~Bahturin, M.~Zaicev, \emph{Group gradings on matrix algebras}, Canadian Mathematical Bulletin \textbf{45 (4)} (2002), 499--508.
\bibitem{BaZa2010} Y.~A.~Bahturin, M.~Zaicev, \emph{Gradings on simple algebras of finitary matrices}, Journal of Algebra \textbf{324} (2010), 1279--1289.
\bibitem{BD2013} M.~Barascu, S.~D\v asc\v alescu, \emph{Good gradings on upper block triangular matrix algebras}, Communications in Algebra, \textbf{41(11)} (2013), 4290--4298.
\bibitem{BesDas} F.~Be\c sleag\v a, S.~D\v asc\v alescu, \emph{Structural matrix algebras, generalized flags, and gradings}, Transactions of the American Mathematical Society \textbf{373} (2020), 6863--6885.
\bibitem{BFD2018} A.~Borges, C.~Fidelis, D.~Diniz, \emph{Graded isomorphisms on upper block triangular matrix algebras}, Linear Algebra  and its Applications \textbf{543} (2018), 92--105.
\bibitem{Bourbaki} N.~Bourbaki, \emph{General Topology: Chapters 1–4}. Vol. 18. Springer Science \& Business Media (2013).
\bibitem{VinKoVa2004} O.~M.~Di Vincenzo, P.~Koshlukov, A.~Valenti, \emph{Gradings on the algebra of upper triangular matrices and their graded identities}, Journal of Algebra \textbf{275(2)} (2004), 550--566.
\bibitem{DM2006} C.~Draper, C.~Mart\'in, \emph{Gradings on $\mathfrak{g}_2$}, Linear Algebra and its Applications \textbf{418} (2006), 85--111.
\bibitem{EIS} J.~Edison, M.~Iovanov, A.~Sistko, \emph{Ore extensions and infinite triangularization}, Journal of Algebra \textbf{572} (2021), 297--325.
\bibitem{Elduque} A.~Elduque, \emph{Fine gradings on simple classical Lie algebras}, Journal of Algebra \textbf{324} (2010), 3532--3571.
\bibitem{EK13} A.~Elduque, M.~Kochetov, \emph{Gradings on simple Lie algebras}, Mathematical Surveys and Monographs, 189. American Mathematical Society (2013).
\bibitem{Hazbook} R.~Hazrat, \emph{Graded Rings and Graded Grothendieck Groups}, London Mathematical Society. Lecture Note Ser. 435, Cambridge University Press, Cambridge, 2016.
\bibitem{IMesR} M.~Iovanov, Z.~Mesyan, M.~Reyes, \emph{Infinite-dimensional diagonalization and semisimplicity}, Israel Journal of Mathematics \textbf{215} (2016), 801--855.
\bibitem{Jon2006} M.~Jones, \emph{Elementary and good group gradings of incidence algebras over partially-ordered sets with cross-cuts}, Communications in Algebra \textbf{34} (2006), 2369--2387.
\bibitem{KochY} M.~Kochetov, F.~Yasumura, \emph{Group gradings on the Lie and Jordan algebras of block-triangular matrices}, Journal of Algebra \textbf{537} (2019), 147--172.
\bibitem{Khrip} N.~S.~Khripchenko,  B.~V.~Novikov, \emph{Finitary incidence algebras}, Communications in Algebra \textbf{37(5)} (2009), 1670--1676.
\bibitem{Lam} T.~Y.~Lam, \emph{A First Course in Noncommutative Rings}, 2nd ed. Graduate Texts in Mathematics 131, Springer, New York, 2001.
\bibitem{Megg} R.~E.~Megginson, \emph{An introduction to Banach space theory}, Graduate Texts in Mathematics, 183. Springer-Verlag, New York, 1998.
\bibitem{Mes} Z.~Mesyan, \emph{Infinite-dimensional triangularization}, Journal of Pure and Applied Algebra \textbf{222} (2018), 1529--1547.
\bibitem{Mesb} Z.~Mesyan, \emph{Infinite-dimensional triangularizable algebras}, Forum Mathematicum \textbf{31} (2019), 19--33.
\bibitem{Mil} G.~Militaru, \emph{The automorphisms group and the classification of gradings of finite dimensional associative algebras}, Results in Mathematics \textbf{23} (2023).
\bibitem{MSp2010} L.~Miller, E.~Spiegel, \emph{Group gradings in incidence algebras}, Communications in Algebra \textbf{38} (2010), 953--963.
\bibitem{PZ89} J.~Patera, H.~Zassenhaus, \emph{On Lie gradings I}, Linear Algebra and its Applications \textbf{112} (1989), 87--159.
\bibitem{Pr2013} K.~Price, \emph{Good gradings of generalized incidence rings}, Communications in Algebra \textbf{41} (2013), 3668--3678.
\bibitem{SSY} E.~Santulo Jr., J.~Souza, F.~Yasumura, \emph{Group gradings on finite dimensional incidence algebras}, Journal of Algebra \textbf{544} (2020), 302--328.
\bibitem{VZ2003} A.~Valenti, M.~Zaicev, \emph{Abelian gradings on upper-triangular matrices}, Archiv der Mathematik, \textbf{80(1)} (2003), 12--17.
\bibitem{VZ2007} A.~Valenti, M.~Zaicev, \emph{Group gradings on upper triangular matrices}, Archiv der Mathematik, \textbf{89(1)} (2007), 33--40.
\bibitem{VZ2011} A.~Valenti, M.~Zaicev, \emph{Abelian gradings on upper block triangular matrices}, Canadian Mathematical Bulletin, \textbf{55(1)} (2011), 208--213.
\bibitem{Y} F.~Yasumura, \emph{Group gradings on upper block triangular matrices}, Archiv der Mathematik \textbf{110} (2018), 327--332.
\bibitem{Zelm} E.~Zel’manov, \emph{Semigroup algebras with identities}, Siberian Mathematical Journal \textbf{18} (1977), 557--565. 
\end{thebibliography}
\end{document}